\date{\today}
\newtheorem{theorem}{Theorem}
\newtheorem{proposition}[theorem]{Proposition}
\newtheorem{corollary}[theorem]{Corollary}
\newtheorem{lemma}[theorem]{Lemma}
\theoremstyle{definition}
\newtheorem{question}[theorem]{Question}
\newtheorem{example}[theorem]{Example}
\newtheorem{definition}[theorem]{Definition}
\begin{document}

\title[Brandt extensions and primitive topological inverse
semigroups] {Brandt extensions and primitive topological inverse
semigroups}

\author{Tetyana~Berezovski}
\address{Department of Math/CSC,
Saint Joseph's University 5600, City Avenue, Philadelphia PA
19131, U.S.A.} \email{tberezov@sju.edu}

\author{Oleg~Gutik}
\address{Department of Mathematics, Ivan Franko Lviv National
University, Universytetska 1, Lviv, 79000, Ukraine}
\email{o\underline{\hskip5pt}\,gutik@franko.lviv.ua,
ovgutik@yahoo.com}

\author{Kateryna~Pavlyk}
\address{Pidstrygach Institute for Applied
Problems of Mechanics and Mathematics of National Academy of
Sciences, Naukova 3b, Lviv, 79060, Ukraine and Department of
Mathematics, Ivan Franko Lviv National University, Universytetska
1, Lviv, 79000, Ukraine} \email{kpavlyk@yahoo.co.uk}

\keywords{Topological semigroup, topological inverse semigroup,
primitive inverse semigroup, Brand semigroup, Brandt
$\lambda$-extension, topological Brandt $\lambda$-extension,
$H$-closed topological semigroup, absolutely $H$-closed
topological semigroup, topological group}

\subjclass[2000]{22A15, 54G12, 54H10, 54H12}

\begin{abstract}
In the paper we study (countably) compact and (absolutely)
$H$-closed primitive topological inverse semigroups. We describe
the structure of compact and countably compact primitive
topological inverse semigroups and show that any countably compact
primitive topological inverse semigroup embeds into a compact
primitive topological inverse semigroup.
\end{abstract}

\maketitle


In this paper all spaces are Hausdorff.

A semigroup is a non-empty set with a binary associative
operation. A semigroup $S$ is called \emph{inverse} if for any
$x\in S$ there exists a unique $y\in S$ such that $x\cdot y\cdot
x=x$ and $y\cdot x\cdot y=y$. Such an element $y$ in $S$ is called
\emph{inverse} to $x$ and denoted by $x^{-1}$. The map defined on
an inverse semigroup $S$ which maps to any element $x$ of $S$ its
inverse $x^{-1}$ is called the \emph{inversion}.

A {\it topological semigroup} is a Hausdorff topological space
with a jointly continuous semigroup operation. A topological
semigroup which is an inverse semigroup is called an \emph{inverse
topological semigroup}. A \emph{topological inverse semigroup} is
an inverse topological semigroup with continuous inversion. A {\it
topological group} is a topological space with a continuous group
operation and an inversion. We observe that the inversion on a
topological inverse semigroup is a homeomorphism (see
\cite[Proposition~II.1]{EberhartSelden1969}). A Hausdorff topology
$\tau$ on a (inverse) semigroup $S$ is called (\emph{inverse})
\emph{semigroup} if $(S,\tau)$ is a topological (inverse)
semigroup.

Further we shall follow the terminology of \cite{CHK, CP,
Engelking1989, Howie1995, Petrich1984}. If $S$ is a semigroup,
then by $E(S)$ we denote the band (the subset of idempotents) of
$S$, and by $S^1$ [$S^0$] we denote the semigroup $S$ with the
adjoined unit [zero] (see \cite[p.~2]{Howie1995}). Also if a
semigroup $S$ has zero $0_S$, then for any $A\subseteq S$ we
denote $A^*=A\setminus\{ 0_S\}$. If $Y$ is a subspace of a
topological space $X$ and $A\subseteq Y$, then by
$\operatorname{cl}_Y(A)$ we denote the topological closure of $A$
in $Y$. The set of positive integers is denoted by $\mathbb{N}$.

If $E$ is a semilattice, then the semilattice operation on $E$
determines the partial order $\leqslant$ on $E$: $$e\leqslant
f\quad\text{if and only if}\quad ef=fe=e.$$ This order is called
{\em natural}. An element $e$ of a partially ordered set $X$ is
called {\em minimal} if $f\leqslant e$  implies $f=e$ for $f\in
X$. An idempotent $e$ of a semigroup $S$ without zero (with zero)
is called \emph{primitive} if $e$ is a minimal element in $E(S)$
(in $(E(S))^*$).

Let $S$ be a semigroup with zero and $I_\lambda$ be a set of
cardinality $\lambda\geqslant 1$. On the set
$B_{\lambda}(S)=\left(I_\lambda\times S\times
I_\lambda\right)\cup\{ 0\}$ we define the semigroup operation as
follows
 $$
 (\alpha,a,\beta)\cdot(\gamma, b, \delta)=
  \begin{cases}
    (\alpha, ab, \delta), & \text{ if } \beta=\gamma; \\
    0, & \text{ if } \beta\ne \gamma,
  \end{cases}
 $$
and $(\alpha, a, \beta)\cdot 0=0\cdot(\alpha, a, \beta)=0\cdot
0=0,$ for all $\alpha, \beta, \gamma, \delta\in I_\lambda$ and $a,
b\in S$. If $S=S^1$ then the semigroup $B_\lambda(S)$ is called
the {\it Brandt $\lambda-$extension of the semigroup}
$S$~\cite{GutikPavlyk2001}. Obviously, ${\mathcal J}=\{
0\}\cup\{(\alpha, {\mathscr O}, \beta)\mid {\mathscr O}$ is the
zero of $S\}$ is an ideal of $B_\lambda(S)$. We put
$B^0_\lambda(S)=B_\lambda(S)/{\mathcal J}$ and we shall call
$B^0_\lambda(S)$ the {\it Brandt $\lambda^0$-extension of the
semigroup $S$ with zero}~\cite{GutikPavlyk2006}. Further, if
$A\subseteq S$ then we shall denote $A_{\alpha,\beta}=\{(\alpha,
s, \beta)\mid s\in A \}$ if $A$ does not contain zero, and
$A_{\alpha,\beta}=\{(\alpha, s, \beta)\mid s\in A\setminus\{ 0\}
\}\cup \{ 0\}$ if $0\in A$, for $\alpha, \beta\in I_{\lambda}$. If
$\mathcal{I}$ is a trivial semigroup (i.e. $\mathcal{I}$ contains
only one element), then by ${\mathcal{I}}^0$ we denote the
semigroup $\mathcal{I}$ with the adjoined zero. Obviously, for any
$\lambda\geqslant 2$ the Brandt $\lambda^0$-extension of the
semigroup ${\mathcal{I}}^0$ is isomorphic to the semigroup of
$I_\lambda\times I_\lambda$-matrix units and any Brandt
$\lambda^0$-extension of a semigroup with zero contains the
semigroup of $I_\lambda\times I_\lambda$-matrix units. Further by
$B_\lambda$ we shall denote the semigroup of $I_\lambda\times
I_\lambda$-matrix units and by $B^0_\lambda(1)$ the subsemigroup
of $I_\lambda\times I_\lambda$-matrix units of the Brandt
$\lambda^0$-extension of a monoid $S$ with zero. A completely
$0$-simple inverse semigroup is called a \emph{Brandt
semigroup}~\cite{Petrich1984}. A semigroup $S$ is a Brandt
semigroup if and only if $S$ is isomorphic to a Brandt
$\lambda$-extension $B_\lambda(G)$ of some group
$G$~\cite[Theorem~II.3.5]{Petrich1984}.

A non-trivial inverse semigroup is called a \emph{primitive
inverse semigroup} if all its non-zero idempotents are
primitive~\cite{Petrich1984}. A semigroup $S$ is a primitive
inverse semigroup if and only if $S$ is an orthogonal sum of
Brandt semigroups~\cite[Theorem~II.4.3]{Petrich1984}.

Green's relations $\mathscr{L}$, $\mathscr{R}$ and $\mathscr{H}$
on a semigroup $S$ are defined by:
\begin{itemize}
    \item[] $a\mathscr{L}b$ \; if and only if \; $a\cup Sa=b\cup Sb$;
    \item[] $a\mathscr{R}b$ \; if and only if \; $a\cup aS=b\cup bS$;
          \;  and
    \item[] $\mathscr{H}=\mathscr{L}\cap\mathscr{R}$,
\end{itemize}
for $a, b\in S$. For details about Green's relations see
\cite[\S~2.1]{CP} or \cite{Green1951}. We observe that two
non-zero elements $(\alpha_1,s,\beta_1)$ and
$(\alpha_2,t,\beta_2)$ of a Brandt semigroup $B_\lambda(G)$,
$s,t\in G$, $\alpha_1,\alpha_2,\beta_1,\beta_2\in I_\lambda$, are
$\mathscr{H}$-equivalent if and only if $\alpha_1=\alpha_2$ and
$\beta_1=\beta_2$ (see \cite[p.~93]{Petrich1984}).

By ${\mathscr S}$ we denote some class of topological semigroups.

\begin{definition}[\cite{GutikPavlyk2001, Stepp1969}]\label{def1}
A~semigroup $S\in{\mathscr S}$ is called {\it $H$-closed in
${\mathscr S}$}, if $S$ is a closed subsemigroup of any
topological semigroup $T\in{\mathscr S}$ which contains $S$ as a
subsemigroup. If ${\mathscr S}$ coincides with the class of all
topological semigroups, then the semigroup $S$ is called {\it
$H$-closed}.
\end{definition}

\begin{definition}[\cite{GutikPavlyk2003, Stepp1975}]\label{def3}
A~topological semigroup $S\in{\mathscr S}$ is called {\it
absolutely $H$-closed in the class ${\mathscr S}$} if any
continuous homomorphic image of $S$ into $T\in{\mathscr S}$ is
$H$-closed in ${\mathscr S}$. If ${\mathscr S}$ coincides with the
class of all topological semigroups, then the semigroup $S$ is
called {\it absolutely $H$-closed}.
\end{definition}

A~semigroup $S$ is called {\it algebraically closed in ${\mathscr
S}$} if $S$ with any semigroup topology $\tau$ is $H$-closed in
${\mathscr S}$ and $(S, \tau)\in{\mathscr
S}$~\cite{GutikPavlyk2001}. If ${\mathscr S}$ coincides with the
class of all topological semigroups, then the semigroup $S$ is
called {\it algebraically closed}. A~semigroup $S$ is called {\it
algebraically $h$-closed in ${\mathscr S}$} if $S$ with the
discrete topology $\mathfrak{d}$ is absolutely $H$-closed in
${\mathscr S}$ and $(S, \mathfrak{d})\in{\mathscr S}$. If
${\mathscr S}$ coincides with the class of all topological
semigroups, then the semigroup $S$ is called {\it algebraically
$h$-closed}.

Absolutely $H$-closed semigroups and algebraically $h$-closed
semigroups were introduced by Stepp in~\cite{Stepp1975}. There
they were called {\it absolutely maximal} and {\it algebraic
maximal}, respectively.

\begin{definition}[\cite{GutikPavlyk2001}]\label{definition3.6Brandt-ext}
Let $\lambda$ be a cardinal $\geqslant 1$ and
$(S,\tau)\in\mathscr{S}$. Let $\tau_{B}$ be a topology on
$B_{\lambda}(S)$ such that
\begin{itemize}
    \item[a)] $\left(B_{\lambda}(S), \tau_{B}\right)\in\mathscr{S}$;
              and
    \item[b)] $\tau_{B}|_{(\alpha, S^1, \alpha)}=\tau$ for some
    $\alpha\in I_{\lambda}$.
\end{itemize}
Then $\left(B_{\lambda}(S), \tau_{B}\right)$ is called a {\it
topological Brandt $\lambda$-extension of $(S, \tau)$ in
$\mathscr{S}$}. If $\mathscr{S}$ coincides with the class of all
topological semigroups, then $\left(B_{\lambda}(S),
\tau_{B}\right)$ is called a {\it topological Brandt
$\lambda$-extension of} $(S, \tau)$.
\end{definition}

\begin{definition}[\cite{GutikPavlyk2006}]\label{def2} Let
$\mathscr{S}_0$ be some class of topological semigroups with zero.
{Let $\lambda$ be a cardinal $\geqslant 1$ and
$(S,\tau)\in\mathscr{S}_0$. Let $\tau_{B}$ be a topology on
$B^0_{\lambda}(S)$ such that
\begin{itemize}
  \item[a)] $\left(B^0_{\lambda}(S),
            \tau_{B}\right)\in\mathscr{S}_0$;
  \item[b)] $\tau_{B}|_{(\alpha, S, \alpha)\cup\{ 0\}}=\tau$ for some
            $\alpha\in I_{\lambda}$.
\end{itemize}
Then $\left(B^0_{\lambda}(S), \tau_{B}\right)$ is called a {\it
topological Brandt $\lambda^0$-extension of $(S, \tau)$ in
$\mathscr{S}_0$.} If $\mathscr{S}_0$ coincides with the class of
all topological semigroups, then $\left(B^0_{\lambda}(S),
\tau_{B}\right)$ is called a {\it topological Brandt
$\lambda^0$-extension of} $(S, \tau)$.}
\end{definition}

Gutik and Pavlyk in \cite{GutikPavlyk2001} proved that the
following conditions for a topological semigroup $S$ are
equivalent:
\begin{itemize}
    \item[$(i)$] $S$ is an $H$-closed semigroup in the class of
                 topological inverse semigroups;
    \item[$(ii)$] there exists a cardinal $\lambda\geqslant 1$
                 such that any topological Brandt
                 $\lambda$-extension of $S$ is $H$-closed
                 in the class of topological inverse semigroups;
    \item[$(iii)$] for any cardinal $\lambda\geqslant 1$
                 every topological Brandt
                 $\lambda$-extension of $S$ is $H$-closed
                 in the class of topological inverse semigroups.
\end{itemize}
In \cite{GutikPavlyk2003} they showed that the similar statement
holds for absolutely $H$-closed topological semigroups in the
class of topological inverse semigroups.

In \cite{GutikPavlyk2006}, Gutik and Pavlyk proved the following:

\begin{theorem}\label{th4}
Let $S$ be a topological inverse monoid with zero. Then the
following conditions are equivalent:
\begin{itemize}
  \item[$(i)$] $S$ is an (absolutely) $H$-closed semigroup in the
  class of topological inverse semigroups;
  \item[$(ii)$] there exists a cardinal $\lambda\geqslant 1$ such that
  any topological Brandt $\lambda^0$-extension $B^0_{\lambda}(S)$ of
  the semigroup $S$ is (absolutely) $H$-closed in the class of
  topological inverse semigroups;
  \item[$(iii)$] for each cardinal $\lambda\geqslant 1$, every
  topological Brandt $\lambda^0$-extension $B^0_{\lambda}(S)$ of the
  semigroup $S$ is (absolutely) $H$-closed in the class of topological
  inverse semigroups.
\end{itemize}
\end{theorem}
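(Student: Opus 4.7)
The plan is to prove the cycle $(iii)\Rightarrow(ii)\Rightarrow(i)\Rightarrow(iii)$, in direct parallel with the analogous result for Brandt $\lambda$-extensions (without zero) recalled just before the statement. The implication $(iii)\Rightarrow(ii)$ is automatic: one may take $\lambda=1$, in which case $B^0_1(S)$ is topologically isomorphic to $S$ itself.

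For $(ii)\Rightarrow(i)$ I would exploit the fact that $S$ appears topologically as a corner of $B^0_\lambda(S)$. Since $S$ is a monoid with identity $1_S$, the element $e=(\alpha,1_S,\alpha)$ (for the $\alpha$ distinguished in Definition~\ref{def2}(b)) is a non-zero idempotent of $B^0_\lambda(S)$ with $eB^0_\lambda(S)e=(\alpha,S,\alpha)\cup\{0\}$, and by condition~(b) this corner is topologically isomorphic to $(S,\tau)$. Given an embedding of $S$ as a subsemigroup of some topological inverse semigroup $U$, I would extend it to an embedding $B^0_\lambda(S)\hookrightarrow B^0_\lambda(U)$ where $B^0_\lambda(U)$ is endowed with a topological Brandt $\lambda^0$-extension topology restricting to $\tau_B$ on $B^0_\lambda(S)$; the $H$-closedness of $B^0_\lambda(S)$ then forces it to be closed in $B^0_\lambda(U)$, and intersecting with $eB^0_\lambda(U)e\cong U$ yields $S$ closed in $U$. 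The absolutely $H$-closed version is obtained analogously by first extending any continuous homomorphism $\varphi\colon S\to V$ to the coordinate-wise map $\Phi\colon B^0_\lambda(S)\to B^0_\lambda(V)$ and applying the same closure argument to the image.

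The substantive direction is $(i)\Rightarrow(iii)$. Fix $\lambda\geqslant 1$ and a topological Brandt $\lambda^0$-extension $(B^0_\lambda(S),\tau_B)$, and suppose it sits as a subsemigroup of a topological inverse semigroup $T$. I would decompose $B^0_\lambda(S)=\bigcup_{\alpha,\beta\in I_\lambda}e_\alpha B^0_\lambda(S)e_\beta$ using the idempotents $e_\alpha=(\alpha,1_S,\alpha)$. Each block $e_\alpha B^0_\lambda(S)e_\beta=(\alpha,S,\beta)\cup\{0\}$ is topologically a copy of $S$, so the assumption that $S$ is $H$-closed, combined with the continuity of left and right multiplications by $e_\alpha$ and $e_\beta$ in $T$, gives that each such block is closed in $T$. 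For a net $(x_\sigma)\subseteq B^0_\lambda(S)$ convergent to some $x\in T$, continuous inversion and multiplication produce $xx^{-1}$ and $x^{-1}x$ as limits of idempotents of $B^0_\lambda(S)$. Using that the band $E(S)$ is closed in $S$ (again by $H$-closedness) together with the multiplicative orthogonality of distinct $e_\alpha$, I would conclude that the coordinates $(\alpha,\beta)$ of $x_\sigma$ eventually stabilize along a subnet, placing $x$ into a single block and hence into $B^0_\lambda(S)$.

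The main obstacle I anticipate lies in $(ii)\Rightarrow(i)$: producing the topological Brandt $\lambda^0$-extension topology on $B^0_\lambda(U)$ (respectively $B^0_\lambda(V)$) that restricts to the given topology on $B^0_\lambda(S)$, and verifying in the absolute case that the coordinate-wise map $\Phi$ is continuous; both rest on the construction techniques developed in~\cite{GutikPavlyk2006}. In $(i)\Rightarrow(iii)$ the delicate point is excluding limit points in $T$ that simultaneously involve infinitely many blocks, which I would handle via closedness of the idempotent band and the orthogonality relations $e_\alpha e_\gamma=0$ for $\alpha\neq\gamma$.
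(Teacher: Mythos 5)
The paper does not contain a proof of Theorem~\ref{th4}: it is quoted as a known result (``In \cite{GutikPavlyk2006}, Gutik and Pavlyk proved the following''), so there is no in-paper argument to compare yours against; I can only judge your outline on its own. Its architecture is the standard one --- the cycle $(iii)\Rightarrow(ii)\Rightarrow(i)\Rightarrow(iii)$, with $(iii)\Rightarrow(ii)$ trivial and $B^0_1(S)$ topologically isomorphic to $S$ --- but both substantive directions are left with real gaps, only some of which you flag.

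In $(ii)\Rightarrow(i)$, beyond the topology construction you acknowledge, note that $B^0_\lambda(U)$ is not even defined unless $U$ has a zero; you must first replace $U$ by $\operatorname{cl}_U(S)$, in which $0_S$ becomes a zero by continuity. The genuine difficulty is then continuity of multiplication at the adjoined zero of $B^0_\lambda(U)$ when $\lambda$ is infinite, together with arranging that the restriction to $(\alpha,S,\alpha)\cup\{0\}$ is exactly $\tau$ (an isolated-zero topology will not do unless $0_S$ is isolated in $S$); this is precisely the technical content of \cite{GutikPavlyk2001,GutikPavlyk2006} and cannot be waved at. In $(i)\Rightarrow(iii)$ your assertion that each block $e_\alpha B^0_\lambda(S)e_\beta$ ``is topologically a copy of $S$, so the assumption that $S$ is $H$-closed \dots{} gives that each such block is closed in $T$'' fails as stated for $\alpha\neq\beta$: the off-diagonal blocks carry null multiplication, not the multiplication of $S$, and $H$-closedness is a property of the topological \emph{semigroup} $(S,\tau)$, not of its underlying space. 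You would need to show first that the diagonal block $(\alpha,S,\alpha)\cup\{0\}$ is closed in the closed local submonoid $e_\alpha Te_\alpha=\{t\in T\mid e_\alpha te_\alpha=t\}$, and then transfer closedness to $(\alpha,S,\beta)\cup\{0\}$ via the mutually inverse continuous translations $x\mapsto x\cdot(\alpha,1_S,\beta)$ and $y\mapsto y\cdot(\beta,1_S,\alpha)$. Your stabilization step can be closed cleanly through the identity $x=xx^{-1}x$: if no row index occurs cofinally among the $\alpha_\sigma$, orthogonality gives $(xx^{-1})x_\tau=0$ for every $\tau$, whence $x=(xx^{-1})x=0$, and similarly for columns; as written, ``closedness of $E(S)$'' does not by itself exclude a limit idempotent lying outside $B^0_\lambda(S)$. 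Finally, you do not address the absolutely $H$-closed version of $(i)\Rightarrow(iii)$ at all: there one must control arbitrary continuous homomorphic images of $B^0_\lambda(S)$, which need not be Brandt $\lambda^0$-extensions of images of $S$ (see \cite{GutikRepovs2010}), and this is a further nontrivial layer of the argument.
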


Also, an example of an absolutely $H$-closed topological
semilattice $\mathscr{N}$ with zero and a topological Brandt
$\lambda^0$-extension $B^0_{\lambda}(\mathscr{N})$ of
$\mathscr{N}$ with the following properties:
\begin{itemize}
    \item[$(i)$] $B^0_{\lambda}(\mathscr{N})$ is an absolutely
                 $H$-closed semigroup for any infinite cardinal
                 $\lambda$;
    \item[$(ii)$] $B^0_{\lambda}(\mathscr{N})$ is a
                 $\sigma$-compact inverse topological semigroup
                 for any countable cardinal $\lambda$; and
    \item[$(iii)$] $B^0_{\lambda}(\mathscr{N})$ contains an
                 absolutely $H$-closed ideal $J$ such that the
                 Rees quotient semigroup
                 $B^0_{\lambda}(\mathscr{N})/J$ is not a
                 topological semigroup,
\end{itemize}
were constructed in \cite{GutikPavlyk2006}.

We observe that for any topological Brandt $\lambda$-extension
$B_\lambda(S)$ of a topological semigroup $S$ there exist a
topological monoid $T$ with zero and a topological Brandt
$\lambda^0$-extension $B^0_\lambda(T)$ of $T$, such that the
semigroups $B_\lambda(S)$ and $B^0_\lambda(T)$ are topologically
isomorphic. Algebraic properties of Brandt $\lambda^0$-extensions
of monoids with zero and non-trivial homomorphisms between Brandt
$\lambda^0$-extensions of monoids with zero, and a category whose
objects are ingredients of the construction of Brandt
$\lambda^0$-extensions of monoids with zeros were described in
\cite{GutikRepovs2010}. Also, in  \cite{GutikPavlykReiter2009} and
\cite{GutikRepovs2010} was described a category whose objects are
ingredients in the constructions of finite (compact, countably
compact) topological Brandt $\lambda^0$-extensions of topological
monoids with zeros.

In \cite{GutikPavlyk2001} and \cite{Pavlyk2004} for every infinite
cardinal $\lambda$, semigroup topologies on Brandt
$\lambda$-extensions which preserve an $H$-closedness and an
absolute $H$-closedness were constructed. An example of a
non-$H$-closed topological inverse semigroup $S$  in the class of
topological inverse semigroups such that for any cardinal
$\lambda\geqslant 1$  there exists an absolute $H$-closed
topological Brandt $\lambda$-extension of the semigroup $S$ in the
class of topological semigroups was constructed in
\cite{Pavlyk2004}.

In this paper we study (countably) compact and (absolutely)
$H$-closed primitive topological inverse semigroups. We describe
the structure of compact and countably compact primitive
topological inverse semigroups and show that any countably compact
primitive topological inverse semigroup embeds into a compact
primitive topological inverse semigroup.


\begin{lemma}\label{lemma3.1}
Let $E$ be a topological semilattice with zero $0$ such that every
non-zero idempotent of $E$ is primitive. Then every non-zero
element of $E$ is an isolated point in $E$.
\end{lemma}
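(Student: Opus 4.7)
The plan is to use the strong algebraic constraint that primitivity puts on the multiplication, together with Hausdorffness and joint continuity of the semilattice operation.

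First I would record the purely algebraic fact that in such a semilattice every product of two distinct non-zero elements is $0$. Indeed, if $e,f\in E\setminus\{0\}$ and $ef\neq 0$, then $ef\leqslant e$ and $ef\leqslant f$ in the natural partial order, so the primitivity of $e$ and of $f$ forces $ef=e$ and $ef=f$, hence $e=f$. So for non-zero $e,f\in E$ we have $ef=e$ if $e=f$ and $ef=0$ otherwise.

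Next, I fix a non-zero $e\in E$ and exploit the Hausdorff separation of $e$ from $0$: choose disjoint open sets $U\ni e$ and $V\ni 0$. By joint continuity of multiplication at $(e,e)$, together with $e\cdot e=e\in U$, I can find an open neighbourhood $W$ of $e$ with $W\cdot W\subseteq U$; shrinking $W$ to $W\cap U$ (which still satisfies $(W\cap U)(W\cap U)\subseteq W\cdot W\subseteq U$) I may assume $W\subseteq U$. In particular $0\notin W$ since $U\cap V=\emptyset$.

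Finally I would show $W=\{e\}$. Suppose $f\in W$ with $f\neq e$. Since $0\notin W$, $f$ is non-zero, so by the algebraic observation from the first step $e\cdot f=0$. But $(e,f)\in W\times W$ gives $e\cdot f\in W\cdot W\subseteq U$, contradicting $0\notin U$. Hence $W=\{e\}$ and $e$ is isolated in $E$. The only substantive step is the first, and it is really just a direct unwinding of the definition of primitive idempotent; the rest is a standard Hausdorff-plus-continuity argument.
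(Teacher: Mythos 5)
Your proof is correct and follows essentially the same route as the paper: separate $x$ from $0$, use joint continuity to get a neighbourhood $W$ with $W\cdot W\subseteq U\not\ni 0$, and derive a contradiction from the fact that distinct non-zero primitive idempotents multiply to $0$. The only difference is that you spell out that last algebraic fact explicitly, which the paper leaves implicit.
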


\begin{proof}
Let $x\in E^*$. Since $E$ is a Hausdorff topological semilattice,
for every open neighbourhood $U(x)\not\ni 0$ of the point $x$
there exists an open neighbourhood $V(x)$ of $x$ such that
$V(x)\cdot V(x)\subseteq U(x)$. If $x$ is not an isolated point of
$E$ then $V(x)\cdot V(x)\ni 0$ which contradicts to the choice of
$U(x)$. This implies the assertion of the lemma.
\end{proof}

\begin{lemma}\label{lemma3.2}
Let $S$ be a primitive inverse topological semigroup and $S$ be an
orthogonal sum of the family
$\{B_{\lambda_{i}}(G_i)\}_{i\in\mathscr{A}}$ of topological Brandt
semigroups with zeros, i.~e.
$S=\sum_{i\in\mathscr{A}}B_{\lambda_{i}}(G_i)$. Let
$(\alpha_i,g_i,\beta_i)\in B_{\lambda_{i}}(G_i)$ be a non-zero
element of $S$. Then
\begin{itemize}
    \item[$(i)$] there exists an open neighbourhood $U$ of
    $(\alpha_i,g_i,\beta_i)$ such that $U\subseteq
    S^*_{\alpha_i,\beta_i}\subseteq B_{\lambda_{i}}(G_i)$;

    \item[$(ii)$] every non-zero idempotent of $S$ is an isolated
    point in $E(S)$.
\end{itemize}
\end{lemma}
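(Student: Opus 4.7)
The plan is to prove part~(ii) first, as a direct application of Lemma~\ref{lemma3.1}, and then to deduce part~(i) from it together with the continuity of the involution-based maps $s\mapsto ss^{-1}$ and $s\mapsto s^{-1}s$.

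For (ii) I would note that the band $E(S)$, endowed with the subspace topology, is itself a Hausdorff topological semilattice with zero: its operation is the restriction of the continuous multiplication on $S$, commutative and associative on idempotents, and it contains the zero of $S$; by the hypothesis that $S$ is a primitive inverse semigroup, every non-zero element of $E(S)$ is primitive in $E(S)$. Lemma~\ref{lemma3.1} therefore applies verbatim and delivers the claim.

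For (i) I would use that on the inverse topological semigroup $S$ the maps $\varphi_L\colon s\mapsto ss^{-1}$ and $\varphi_R\colon s\mapsto s^{-1}s$ are continuous, being compositions of the continuous multiplication and continuous inversion. Writing $e_i$ for the identity of $G_i$, one computes $\varphi_L(\alpha_i,g_i,\beta_i)=(\alpha_i,e_i,\alpha_i)$ and $\varphi_R(\alpha_i,g_i,\beta_i)=(\beta_i,e_i,\beta_i)$; both are non-zero idempotents of $S$, hence isolated in $E(S)$ by part~(ii). Consequently
\[
U := \varphi_L^{-1}\bigl(\{(\alpha_i,e_i,\alpha_i)\}\bigr)\,\cap\,\varphi_R^{-1}\bigl(\{(\beta_i,e_i,\beta_i)\}\bigr)
\]
is an open neighbourhood of $(\alpha_i,g_i,\beta_i)$ in $S$, and it remains to verify $U\subseteq S^*_{\alpha_i,\beta_i}$. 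Any $s\in U$ lies in some Brandt summand $B_{\lambda_j}(G_j)$; then so does $ss^{-1}$, and since $ss^{-1}=(\alpha_i,e_i,\alpha_i)\neq 0$ the orthogonality of the summation forces $j=i$. Writing $s=(\alpha,g,\beta)\in B_{\lambda_i}(G_i)$, the two idempotent equations pin down $\alpha=\alpha_i$ and $\beta=\beta_i$, so $s\in S^*_{\alpha_i,\beta_i}$.

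The conceptual heart of the argument is that isolatedness inside $E(S)$—a purely semilattice-theoretic fact supplied by Lemma~\ref{lemma3.1}—is transferred by the continuous maps $\varphi_L,\varphi_R$ to a separation of whole $\mathscr{H}$-classes in $S$; once this is in hand, the orthogonal-sum structure automatically confines the relevant $\mathscr{H}$-class to a single Brandt component $B_{\lambda_i}(G_i)$. I do not anticipate any serious obstacle beyond keeping the orthogonal-sum index bookkeeping straight.
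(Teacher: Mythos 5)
Your treatment of part~$(ii)$ coincides with the paper's: $E(S)$ is a Hausdorff topological semilattice with zero whose non-zero idempotents are primitive, so Lemma~\ref{lemma3.1} applies. Part~$(i)$, however, contains a genuine gap. The hypothesis is that $S$ is a primitive \emph{inverse topological semigroup}, which in this paper's terminology means a topological semigroup that happens to be an inverse semigroup; continuity of the inversion is \emph{not} assumed (that stronger property defines a \emph{topological inverse semigroup}). Your argument rests entirely on the continuity of $\varphi_L\colon s\mapsto ss^{-1}$ and $\varphi_R\colon s\mapsto s^{-1}s$, which you justify as compositions of multiplication with inversion --- a justification that is unavailable here. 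The distinction is not pedantic in this paper: in Example~\ref{example3.5} the semigroup $(\mathbb{Z}^0,\tau)$ is a primitive inverse topological semigroup in which $\{\infty\}$ is open in $E(\mathbb{Z}^0)$ yet $\varphi_L^{-1}(\{\infty\})=\{\infty\}$ is not open in $\mathbb{Z}^0$, so $\varphi_L$ is not continuous; and Lemma~\ref{lemma3.4}, whose proof is exactly your argument, is accordingly stated only for topological inverse semigroups. Note also that the set $U$ you exhibit is the entire $\mathscr{H}$-class $S^*_{\alpha_i,\beta_i}$, whose openness is precisely the content of part~$(i)$ (cf.\ Corollary~\ref{corollary3.3}); once the continuity of $\varphi_L,\varphi_R$ is withdrawn, nothing in your argument establishes it.

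The paper's proof avoids inversion altogether and uses only the joint continuity of multiplication: supposing no neighbourhood of $(\alpha_i,g_i,\beta_i)$ lies in $S^*_{\alpha_i,\beta_i}$, choose $V\ni(\alpha_i,g_i,\beta_i)$ with $0\notin V$ and then, by continuity of multiplication, a neighbourhood $W$ with $(\alpha_i,1_i,\alpha_i)\cdot W\cdot(\beta_i,1_i,\beta_i)\subseteq V$. Every element of $W$ lying outside $S^*_{\alpha_i,\beta_i}$ --- whether in another Brandt summand, in the same summand with a wrong index, or equal to $0$ --- is sent to $0$ by this two-sided translation by idempotents, forcing $0\in V$, a contradiction. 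Replacing your $\varphi_L,\varphi_R$ by these idempotent translations (which are continuous for free) repairs the proof; your index bookkeeping at the end is otherwise fine.
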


\begin{proof}
$(i)$ Suppose to the contrary that $U\nsubseteq
S^*_{\alpha_i,\beta_i}\subseteq B_{\lambda_{i}}(G_i)$ for any open
neighbourhood $U$ of the point $(\alpha_i,g_i,\beta_i)$. Since $S$
is a Hausdorff space there exists an open neighbourhood $V$ of the
point $(\alpha_i,g_i,\beta_i)$ such that $0\notin V$. The
continuity of the semigroup operation in $S$ implies that there
exists an open neighbourhood $W$ of the point
$(\alpha_i,g_i,\beta_i)$ such that $(\alpha_i,1_i,\alpha_i)\cdot
W\cdot (\beta_i,1_i,\beta_i)\subseteq V$. Since $W\nsubseteq
S^*_{\alpha_i,\beta_i}$, we have that $0\in V$, a contradiction.

Statement $(ii)$ follows from Lemma~\ref{lemma3.1}.
\end{proof}

Lemma~\ref{lemma3.2} implies

\begin{corollary}\label{corollary3.3}
Every non-zero $\mathscr{H}$-class of a primitive inverse
topological semigroup $S$ is an open subset in $S$.
\end{corollary}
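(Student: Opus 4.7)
The plan is to deduce Corollary~\ref{corollary3.3} as a direct consequence of Lemma~\ref{lemma3.2}$(i)$ combined with the description of non-zero $\mathscr{H}$-classes in a Brandt semigroup recalled in the introduction.

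First I would fix a non-zero $\mathscr{H}$-class $H$ of $S=\sum_{i\in\mathscr{A}}B_{\lambda_{i}}(G_i)$ and show that $H$ is entirely contained in a single summand $B_{\lambda_{i}}(G_i)$. This follows because the Brandt summands intersect pairwise only in $\{0\}$, so any non-zero Green equivalence cannot cross summands. Consequently, if I pick $(\alpha,g,\beta)\in H\cap B_{\lambda_{i}}(G_i)$, then by the characterization cited from \cite[p.~93]{Petrich1984} we have
\[
H=\{(\alpha,g',\beta):g'\in G_i\}=S^{*}_{\alpha,\beta}.
\]

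Next I would apply Lemma~\ref{lemma3.2}$(i)$ at an arbitrary point $(\alpha,g',\beta)\in H$: this yields an open neighbourhood $U$ of $(\alpha,g',\beta)$ with $U\subseteq S^{*}_{\alpha,\beta}=H$. Since every point of $H$ admits an open neighbourhood contained in $H$, the set $H$ is open in $S$, which is the desired conclusion.

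There is no real obstacle here — the corollary is essentially a restatement of Lemma~\ref{lemma3.2}$(i)$ once one observes that $S^{*}_{\alpha,\beta}$ coincides with the non-zero $\mathscr{H}$-class through $(\alpha,g,\beta)$. The only small point to be careful about is the orthogonal-sum bookkeeping, namely ensuring that $H$ does not leak out of the Brandt summand containing one of its representatives; this is immediate from the definition of an orthogonal sum.
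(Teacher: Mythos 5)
Your proof is correct and follows exactly the route the paper intends: the paper gives no separate argument, stating only that Lemma~\ref{lemma3.2} implies the corollary, and your write-up is precisely the omitted verification that a non-zero $\mathscr{H}$-class equals $S^{*}_{\alpha,\beta}$ inside one Brandt summand and is therefore a union of the open neighbourhoods supplied by Lemma~\ref{lemma3.2}$(i)$.
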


\begin{lemma}\label{lemma3.4}
If $S$ is a primitive topological inverse semigroup, then every
non-zero $\mathscr{H}$-class of $S$ is a clopen subset in $S$.
\end{lemma}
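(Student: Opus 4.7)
The plan is to complement Corollary~\ref{corollary3.3}, which already gives openness, by exhibiting the $\mathscr{H}$-class as the preimage of a closed set under a continuous map that uses the continuous inversion available on a topological inverse semigroup (which is the only extra hypothesis compared with Corollary~\ref{corollary3.3}).

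Write $S=\sum_{i\in\mathscr{A}}B_{\lambda_i}(G_i)$, fix a non-zero element $a=(\alpha_i,g_i,\beta_i)$, and let $H$ denote its $\mathscr{H}$-class. Set $e=aa^{-1}=(\alpha_i,1_i,\alpha_i)$ and $f=a^{-1}a=(\beta_i,1_i,\beta_i)$. In an inverse semigroup with zero, two non-zero elements $x,y$ satisfy $x\mathscr{H}y$ if and only if $xx^{-1}=yy^{-1}$ and $x^{-1}x=y^{-1}y$; therefore
\[
H=\{x\in S : xx^{-1}=e\text{ and }x^{-1}x=f\}.
\]
Note $0$ is automatically excluded, since $0\cdot 0^{-1}=0\neq e$.

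Now the two maps $\varphi_1,\varphi_2\colon S\to S$ defined by $\varphi_1(x)=xx^{-1}$ and $\varphi_2(x)=x^{-1}x$ are continuous, because the inversion and the multiplication of $S$ are continuous. Since $S$ is Hausdorff, the singletons $\{e\}$ and $\{f\}$ are closed, and hence
\[
H=\varphi_1^{-1}(\{e\})\cap\varphi_2^{-1}(\{f\})
\]
is a closed subset of $S$. Combined with Corollary~\ref{corollary3.3}, which asserts that $H$ is open, this gives that $H$ is clopen.

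No step here is expected to be a real obstacle; the only substantive observation is that the continuous inversion (which distinguishes a topological inverse semigroup from a mere inverse topological semigroup) is precisely what upgrades the openness obtained in Corollary~\ref{corollary3.3} to a clopen conclusion, via the standard characterization of $\mathscr{H}$-classes by the pair $(xx^{-1},x^{-1}x)$.
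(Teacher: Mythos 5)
Your proof is correct and follows essentially the same route as the paper: the paper also writes the $\mathscr{H}$-class as $\varphi^{-1}(\{e\})\cap\psi^{-1}(\{f\})$ for the continuous maps $\varphi(x)=xx^{-1}$ and $\psi(x)=x^{-1}x$, obtaining closedness from Hausdorffness exactly as you do (and openness from the fact that $e,f$ are isolated in $E(S)$ by Lemma~\ref{lemma3.1}, where you instead invoke Corollary~\ref{corollary3.3} --- an immaterial difference).
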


\begin{proof}
Let $H(e,f)$ be a non-zero $\mathscr{H}$-class in $S$ for
$e,f\in(E(S))^*$, i.~e.
\begin{equation*}
H(e,f)=\{ x\in S\mid x\cdot x^{-1}=e \; \mbox{~and~}\; x^{-1}\cdot
x=f\}.
\end{equation*}
Since $S$ is a topological inverse semigroup, the maps
$\varphi\colon S\rightarrow E(S)$ and $\psi\colon S\rightarrow
E(S)$ defined by the formulae $\varphi(x)=x\cdot x^{-1}$ and
$\psi(x)=x^{-1}\cdot x$ are continuous. By Lemma~\ref{lemma3.1},
$e$ and $f$ are isolated points in $E(S)$. Then the continuity of
the maps $\varphi$ and $\psi$ implies the statement of the lemma.
\end{proof}

The following example shows that the statement of
Lemma~\ref{lemma3.4} does not hold for primitive inverse locally
compact $H$-closed topological semigroups.

\begin{example}\label{example3.5}
Let $\mathbb{Z}$ be the discrete additive group of integers. We
extend the semigroup operation from $\mathbb{Z}$ onto
$\mathbb{Z}^0=\mathbb{Z}\cup\{\infty\}$ as follows:
\begin{equation*}
    x\cdot\infty=\infty\cdot x=\infty\cdot\infty=\infty, \qquad
    \mbox{for all} \quad x\in\mathbb{Z}.
\end{equation*}
We observe that $\mathbb{Z}^0$ is the group with adjoined zero
$\infty$. We determine a semigroup topology $\tau$ on
$\mathbb{Z}^0$ as follows:
\begin{itemize}
    \item[$(i)$] every non-zero element of $\mathbb{Z}^0$ is an
    isolated point;
    \item[$(ii)$] the family $\mathscr{B}(\infty)=\big\{
    U_n=\{\infty\}\cup\{x\in\mathbb{Z}\mid x\geqslant n\}\mid n
    \mbox{~is a positive integer}\big\}$ is a base of the topology
    $\tau$ at the point $\infty$.
\end{itemize}
A simple verification shows that $(\mathbb{Z}^0,\tau)$ is a
primitive inverse locally compact topological semigroup.
\end{example}

\begin{proposition}\label{proposition3.6}
$(\mathbb{Z}^0,\tau)$ is an $H$-closed topological semigroup.
\end{proposition}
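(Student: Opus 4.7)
The plan is to argue by contradiction. Assume $(\mathbb{Z}^0, \tau)$ sits as a topological subsemigroup inside some topological semigroup $T$, and that there exists $y \in \operatorname{cl}_T(\mathbb{Z}^0) \setminus \mathbb{Z}^0$. The key trick will be that, for any integer $x$, we have $x \cdot (-x) = 0$ while $x \cdot \infty = \infty$, and $0 \neq \infty$; so if I can find a net $(x_\alpha) \subseteq \mathbb{Z}$ with $x_\alpha \to y$ and $-x_\alpha \to \infty$ in $T$, then by continuity of multiplication I will be able to compute $y \cdot \infty$ in two incompatible ways.

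To produce such a net, I would start with an arbitrary net $(x_\alpha)$ in $\mathbb{Z}^0$ with $x_\alpha \to y$ in $T$ and prune it by eliminating the easy behaviours. Since every integer $n$ is isolated in $\mathbb{Z}^0$ and the basic neighbourhood base at $\infty$ has the specific form $U_n = \{\infty\} \cup \{x \in \mathbb{Z} : x \geq n\}$ from Example \ref{example3.5}, Hausdorffness of $T$ together with $y \notin \mathbb{Z}^0$ rules out any subnet of $(x_\alpha)$ clustering at a point of $\mathbb{Z}^0$: a subnet converging to an integer $n$ would be eventually constant $n$ by isolatedness of $n$ in the subspace topology, forcing $y = n$; a subnet converging to $\infty$ would force $y = \infty$. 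Thus after passing to a tail, $x_\alpha \in \mathbb{Z}$, some $U_n$ is eventually avoided (so $x_\alpha \leq n-1$), and each fixed integer $m \leq n-1$ is eventually avoided. Since only finitely many integers $m$ lie in any window $\{-N+1, \ldots, n-1\}$, a finite-choice combinatorial step then forces $x_\alpha \to -\infty$ in the usual order on $\mathbb{Z}$, whence $-x_\alpha \to \infty$ first in $\mathbb{Z}^0$ and then in $T$.

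With this net in hand, continuity of the operation in $T$ finishes the proof. From $(x_\alpha, -x_\alpha) \to (y, \infty)$ in $T \times T$ and the constant value $x_\alpha \cdot (-x_\alpha) = 0$ I obtain $y \cdot \infty = 0$, while from $(x_\alpha, \infty) \to (y, \infty)$ and $x_\alpha \cdot \infty = \infty$ I obtain $y \cdot \infty = \infty$; together these contradict $0 \neq \infty$. The main obstacle I anticipate is the case-analysis step producing the net tending to $-\infty$: I need to keep straight that convergence in $\mathbb{Z}^0$ (which carries the subspace topology from $T$) to a point of $\mathbb{Z}^0$ is the same as convergence in $T$ to that point, and to exploit the standard fact that cluster points in a Hausdorff space are precisely subnet limits, so that ruling out cluster points in $\mathbb{Z}^0$ really does confine the net to the discrete negative tail $\{\ldots, -2, -1\}$ of $\mathbb{Z}$.
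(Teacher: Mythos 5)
Your proof is correct and follows essentially the same route as the paper's: you reduce to a net of integers tending to $-\infty$ that converges to the putative outside point $y$, and then compute $y\cdot\infty$ in two incompatible ways by joint continuity (the paper uses $-n_\iota+(n_\iota+k)=k$ for distinct values of $k$, while you use $x_\alpha+(-x_\alpha)=0$ together with $x_\alpha\cdot\infty=\infty$). You also spell out, correctly, the pruning step that the paper leaves implicit, namely that any net in $\mathbb{Z}^0$ converging to a point outside $\mathbb{Z}^0$ must eventually consist of integers running off to $-\infty$.
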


\begin{proof}
Suppose that $\mathbb{Z}\sp{0}$ is embedded into a topological
semigroup $T$. If $\{n\sb{\iota}\}$ is a net in $\mathbb{N}$ for
which $\{-n\sb{\iota}\}$ converges in $T$ to $t\in
T\setminus\mathbb{Z}\sp{0}$, then the equation
$-n\sb{\iota}+(n\sb{\iota}+k)=k$ implies that $t\cdot\infty=k$ for
every $k\in\mathbb{N}$~--- which is impossible. So
$\mathbb{Z}\sp{0}$ is closed in $T$.
\end{proof}

\begin{proposition}\label{proposition3.7}
Every completely $0$-simple topological inverse semigroup $S$ is
topologically isomorphic to a topological Brandt
$\lambda$-extension $B_{\lambda}(G)$ of some topological group $G$
and cardinal $\lambda\geqslant 1$ in the class of topological
inverse semigroups. Furthermore:
\begin{itemize}
    \item[$(i)$] any non-zero subgroup of $S$ is topologically
         isomorphic to $G$ and every non-zero $\mathscr{H}$-class
         of $S$ is homeomorphic to $G$ and is a clopen subset in
         $S$;
    \item[$(ii)$] the family $\mathscr{B}(\alpha,g,\beta)=\{(\alpha,
         g\cdot U,\beta)\mid U\in\mathscr{B}_G(e)\}$, where
         $\mathscr{B}_G(e)$ is a base of the topology at the unity
         $e$ of $G$, is a base of the topology at the non-zero
         element $(\alpha,g,\beta)\in B_\lambda(G)$.
\end{itemize}
\end{proposition}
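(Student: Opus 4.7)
The plan is to first invoke the algebraic classification and then upgrade it to a topological isomorphism using the local information in Lemmas~\ref{lemma3.1}--\ref{lemma3.4}. Since an inverse semigroup is completely $0$-simple if and only if it is a Brandt semigroup (Theorem~II.3.5 of~\cite{Petrich1984}), there exist a cardinal $\lambda\geqslant 1$, an abstract group $G$, and a semigroup isomorphism $\Phi\colon S\to B_\lambda(G)$. I would identify $G$ concretely with the maximal subgroup $H_{e_\alpha}=\Phi^{-1}(\{(\alpha,g,\alpha):g\in G\})$ at a chosen non-zero idempotent $e_\alpha$; by Lemma~\ref{lemma3.4} this $\mathscr{H}$-class is a clopen subset of $S$, and since multiplication and inversion on $S$ are continuous, $H_{e_\alpha}$ is a topological group in its subspace topology, which I take as the topology on $G$.

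For (i), for every $\beta\in I_\lambda$ the map $x\mapsto x\cdot\Phi^{-1}(\alpha,1_G,\beta)$ is a continuous bijection from $H_{e_\alpha}$ onto the $\mathscr{H}$-class $\Phi^{-1}(H_{\alpha,\beta})$, whose inverse $y\mapsto y\cdot\Phi^{-1}(\beta,1_G,\alpha)$ is also continuous; hence it is a homeomorphism. Composing left and right translations by such ``unit'' elements yields topological group isomorphisms between any two maximal subgroups and homeomorphisms between any two non-zero $\mathscr{H}$-classes. Since every non-zero subgroup of the primitive inverse semigroup $S$ sits inside some maximal subgroup, this gives the first half of (i); the clopenness assertion is Lemma~\ref{lemma3.4}.

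For (ii), by Lemma~\ref{lemma3.2}(i) every open neighbourhood of a non-zero element $\Phi^{-1}(\alpha,g,\beta)$ can be shrunk to lie inside the clopen $\mathscr{H}$-class $\Phi^{-1}(H_{\alpha,\beta})$, and under the homeomorphism of this class with $G$ it corresponds to a neighbourhood of $g$ in $G$, hence contains a set of the form $g\cdot U$ with $U\in\mathscr{B}_G(e)$; conversely each such $(\alpha,g\cdot U,\beta)$ is open by the same homeomorphism. Consequently $\Phi$ is a homeomorphism, and the restriction of the resulting topology of $B_\lambda(G)$ to $(\alpha,G,\alpha)$ coincides by construction with the topology of $G$, verifying Definition~\ref{definition3.6Brandt-ext}. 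The main obstacle is precisely this last step: showing that the topology of $S$ is fully reconstructed from the $\mathscr{H}$-class topologies together with neighbourhoods of $0$, for which Lemma~\ref{lemma3.2}(i) is indispensable as it localises every non-zero neighbourhood into a single $\mathscr{H}$-class.
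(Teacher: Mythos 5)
Your proposal is correct and follows essentially the same route as the paper: the algebraic classification of completely $0$-simple inverse semigroups as Brandt $\lambda$-extensions, followed by two-sided translations by the ``unit'' elements $(\gamma,e,\alpha)$ and $(\beta,e,\delta)$, which are exactly the paper's continuous maps $\varphi^{\gamma\delta}_{\alpha\beta}$ with continuous inverses, giving homeomorphisms of $\mathscr{H}$-classes and topological isomorphisms of maximal subgroups. The only cosmetic difference is in part $(ii)$, where you argue directly via Lemma~\ref{lemma3.2}$(i)$ while the paper cites Theorem~4.3 of \cite{HewittRoss1963}; the content is the same.
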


\begin{proof}
Let $G$ be a non-zero subgroup of $S$. Then by Theorem~3.9
of~\cite{CP} the semigroup $S$ is isomorphic to the Brandt
$\lambda$-extension of the subgroup $G$ for some cardinal
$\lambda\geqslant 1$. Since $S$ is a topological inverse semigroup
we have that $G$ is a topological group.

$(i)$ Let $e$ be the unity of $G$. We fix arbitrary $\alpha,
\beta, \gamma, \delta\in I_{\lambda}$ and define the maps
$\varphi^{\gamma\delta}_{\alpha\beta}\colon
B_{\lambda}(G)\rightarrow B_{\lambda}(G)$ and
$\varphi^{\alpha\beta}_{\gamma\delta}\colon
B_{\lambda}(G)\rightarrow B_{\lambda}(G)$ by the formulae
 $
 \varphi^{\gamma\delta}_{\alpha\beta}(s)=(\gamma, e, \alpha)\cdot
 s\cdot(\beta, e, \delta)
 $  and
 $
 \varphi_{\gamma\delta}^{\alpha\beta}(s)=(\alpha, e,
\gamma)\cdot s\cdot(\delta, e, \beta),
 $
$s\in B_{\lambda}(G)$. We observe that
 $
 \varphi_{\gamma\delta}^{\alpha\beta}\big(
\varphi^{\gamma\delta}_{\alpha\beta}\big((\alpha, x,
\beta)\big)\big)=(\alpha, x, \beta)
 $
  and\break
 $
 \varphi^{\gamma\delta}_{\alpha\beta}\big(
\varphi_{\gamma\delta}^{\alpha\beta}\big((\gamma, x, \delta)\big)
\big)=(\gamma, x, \delta)
 $
for all  $\alpha, \beta, \gamma, \delta\in I_{\lambda}$, $x\in G$,
and hence the restrictions
$\varphi^{\gamma\delta}_{\alpha\beta}\mid_{(\alpha,G,\beta)}$ and
$\varphi_{\gamma\delta}^{\alpha\beta}\mid_{(\gamma,G,\delta)}$ are
mutually invertible. Since the maps
$\varphi^{\gamma\delta}_{\alpha\beta}$ and
$\varphi_{\gamma\delta}^{\alpha\beta}$ are continuous on
$B_{\lambda}(G)$, the map
$\varphi^{\gamma\delta}_{\alpha\beta}\mid_{(\alpha,G,\beta)}\colon
(\alpha,G,\beta)\rightarrow (\gamma,G,\delta)$ is a homeomorphism
and the map
$\varphi^{\gamma\gamma}_{\alpha\alpha}\mid_{(\alpha,G,\alpha)}\colon$
$(\alpha,G,\alpha)\rightarrow (\gamma,G,\gamma)$ is a topological
isomorphism. We observe that the subset $(\alpha,G,\beta)$ of
$B_{\lambda}(G)$ is an $\mathscr{H}$-class of $B_{\lambda}(G)$ and
$(\alpha,G,\alpha)$ is  a subgroup of $B_{\lambda}(G)$ for all
$\alpha,\beta\in I_\lambda$. This completes the proof of assertion
$(i)$.

$(ii)$ The statement follows from assertion $(i)$ and Theorem~4.3
of \cite{HewittRoss1963}.
\end{proof}

We observe that Example~\ref{example3.5} implies that the
statements of Proposition~\ref{proposition3.7} are not true for
completely $0$-simple inverse topological semigroups.
Definition~\ref{definition3.6Brandt-ext} implies that $S$ is a
topological Brandt $\lambda$-extension $B_{\lambda}(G)$ of the
topological group $G$.

Gutik and Repov\v{s}, in \cite{GutikRepovs2007}, studied the
structure of $0$-simple countably compact topological inverse
semigroups. They proved that any $0$-simple countably compact
topological inverse semigroup is topologically isomorphic to a
topological Brandt $\lambda$-extension $B_{\lambda}(H)$ of a
countably compact topological group $H$ in the class of
topological inverse semigroups for some finite cardinal
$\lambda\geqslant 1$. This implies Pavlyk's Theorem (see
\cite{PavlykPhD}) on the structure of $0$-simple compact
topological inverse semigroups: \emph{every $0$-simple compact
topological inverse semigroup is topologically isomorphic to a
topological Brandt $\lambda$-extension $B_{\lambda}(H)$ of a
compact topological group $H$ in the class of topological inverse
semigroups for some finite cardinal $\lambda\geqslant 1$}.

The following theorem describes the structure of primitive
countably compact topological inverse semigroups.

\begin{theorem}\label{theorem3.8}
Every primitive countably compact topological inverse semigroup
$S$ is topologically isomorphic to an orthogonal sum
$\sum_{i\in\mathscr{A}}B_{\lambda_{i}}(G_i)$ of topological Brandt
$\lambda_i$-extensions $B_{\lambda_i}(G_i)$ of countably compact
topological groups $G_i$ in the class of topological inverse
semigroups for some finite cardinals $\lambda_i\geqslant 1$.
Moreover the family
\begin{equation*}
 \mathscr{B}(0)=\big\{S\setminus\big(
B_{\lambda_{i_1}}(G_{i_1})\cup
B_{\lambda_{i_2}}(G_{i_2})\cup\cdots\cup
B_{\lambda_{i_n}}(G_{i_n})\big)^*\mid i_1,
i_2,\ldots,i_n\in\mathscr{A}, n\in\mathbb{N}\big\}
\end{equation*}
determines a base of the topology at zero $0$ of $S$.
\end{theorem}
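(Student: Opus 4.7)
The plan is to combine Petrich's algebraic decomposition of primitive inverse semigroups with the cited Gutik--Repov\v{s} structure theorem for $0$-simple countably compact topological inverse semigroups, and then to extract the neighbourhood base at $0$ from countable compactness together with the disjointness inherent in an orthogonal sum.

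First, by~\cite[Theorem~II.4.3]{Petrich1984} the semigroup $S$ is an algebraic orthogonal sum $\sum_{i\in\mathscr{A}}B_{\lambda_{i}}(G_i)$ of Brandt semigroups. I would next show that each summand is closed in $S$: by Lemma~\ref{lemma3.2}$(i)$ every $B^*_{\lambda_{j}}(G_j)$ is open in $S$, so
\[
S\setminus B_{\lambda_{i}}(G_i)=\bigcup_{j\in\mathscr{A}\setminus\{i\}}B^*_{\lambda_{j}}(G_j)
\]
is open. Consequently each $B_{\lambda_{i}}(G_i)$, with the subspace topology, is a closed $0$-simple countably compact topological inverse semigroup, and the Gutik--Repov\v{s} theorem identifies it with a topological Brandt $\lambda_i$-extension of a countably compact topological group $G_i$ for some finite cardinal $\lambda_i\geqslant 1$.

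To prove that $\mathscr{B}(0)$ is a neighbourhood base at $0$, I would start from an arbitrary open $U\ni 0$ and consider $S\setminus U$, which is closed in $S$, hence countably compact, and is covered by the pairwise disjoint open sets $\{B^*_{\lambda_{i}}(G_i)\}_{i\in\mathscr{A}}$. If infinitely many of these intersected $S\setminus U$, choosing one point from each intersection would yield a countably infinite subset of $S\setminus U$ with no accumulation point: any candidate limit would lie in some $B^*_{\lambda_{k}}(G_k)$, and by Lemma~\ref{lemma3.2}$(i)$ it would then admit a neighbourhood meeting the subset in at most one point, contradicting countable compactness. Hence only finitely many indices $i_1,\ldots,i_n$ contribute, giving
\[
U\supseteq S\setminus\bigl(B_{\lambda_{i_1}}(G_{i_1})\cup\cdots\cup B_{\lambda_{i_n}}(G_{i_n})\bigr)^{*}.
\]

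It remains to verify that each member of $\mathscr{B}(0)$ is genuinely an open neighbourhood of $0$, which is where the finiteness of $\lambda_i$ is crucial. Since $\lambda_i$ is finite, $B^*_{\lambda_{i}}(G_i)$ is a finite union of non-zero $\mathscr{H}$-classes, each of which is clopen in $B_{\lambda_{i}}(G_i)$ by Lemma~\ref{lemma3.4}; consequently $\{0\}$ is open in $B_{\lambda_{i}}(G_i)$, so $0$ admits a neighbourhood in $S$ disjoint from $B^*_{\lambda_{i}}(G_i)$. Combined with the openness of $B^*_{\lambda_{i}}(G_i)$, this shows each $B^*_{\lambda_{i}}(G_i)$ is clopen in $S$; finite unions and complements of clopen sets are clopen, so the sets in $\mathscr{B}(0)$ are indeed open. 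I expect the main obstacle to lie in the second paragraph: one must deploy countable compactness carefully against a disjoint open cover and invoke Lemma~\ref{lemma3.2}$(i)$ to exclude non-zero accumulation points, since without it nothing would force the index set to behave discretely.
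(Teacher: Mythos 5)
Your proof is correct and follows essentially the same route as the paper: Petrich's orthogonal-sum decomposition, closedness of each summand (via the openness of the sets $B^*_{\lambda_j}(G_j)$ from Lemma~\ref{lemma3.2}) so that the Gutik--Repov\v{s} theorem applies, and a countable-compactness argument for the neighbourhood base at zero. The only differences are cosmetic: you deploy countable compactness via accumulation points of a transversal of $S\setminus U$ rather than via the paper's countable open cover of $\mathscr{H}$-classes admitting no finite subcover, and you additionally verify (correctly, using the finiteness of the $\lambda_i$ and Lemma~\ref{lemma3.4}) that the members of $\mathscr{B}(0)$ are in fact open, a point the paper leaves implicit.
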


\begin{proof}
By Theorem~II.4.3 of~\cite{Petrich1984} the semigroup $S$ is an
orthogonal sum of Brandt semigroups and hence $S$ is an orthogonal
sum $\sum_{i\in\mathscr{A}}B_{\lambda_{i}}(G_i)$ of Brandt
$\lambda_i$-extensions $B_{\lambda_i}(G_i)$ of groups $G_i$. We
fix any $i_0\in\mathscr{A}$. Since $S$ is a topological inverse
semigroup, Proposition~II.2~\cite{EberhartSelden1969} implies that
$B_{\lambda_{i_0}}(G_{i_0})$ is a topological inverse semigroup.
By Proposition~\ref{proposition3.7}, $B_{\lambda_{i_0}}(G_{i_0})$
is a closed subsemigroup of $S$ and hence by
Theorem~3.10.4~\cite{Engelking1989}, $B_{\lambda_{i_0}}(G_{i_0})$
is a countably compact $0$-simple topological inverse semigroup.
Then, by Theorem~2 of \cite{GutikRepovs2007}, the semigroup
$B_{\lambda_{i_0}}(G_{i_0})$ is a topological Brandt
$\lambda_i$-extension of countably compact topological group
$G_{i_0}$ in the class of topological inverse semigroups for some
finite cardinal $\lambda_{i_0}\geqslant 1$. This completes the
proof of the first assertion of the theorem.

Suppose on the contrary that $\mathscr{B}(0)$ is not a base at
zero $0$ of $S$. Then, there exists an open neighbourhood $U(0)$
of zero $0$ such  that
$U(0)\bigcup\big(B_{\lambda_{i_1}}(G_{i_1})\cup
B_{\lambda_{i_2}}(G_{i_2})\cup\cdots\cup
B_{\lambda_{i_n}}(G_{i_n})\big)^*\neq S$ for finitely many indexes
$i_1, i_2,\ldots,i_n\in\mathscr{A}$. Therefore there exists an
infinitely family $\mathscr{F}$ of non-zero disjoint
$\mathscr{H}$-classes such that $H\nsubseteq U(0)$ for all
$H\in\mathscr{F}$. Let $\mathscr{F}_0$ be an infinite countable
subfamily of $\mathscr{F}$. We put $W=\bigcup\{H\mid
H\in\mathscr{F}\setminus\mathscr{F}_0\}$. Lemma~\ref{lemma3.4}
implies that the family $\mathscr{C}=\{U(0), W\}\cup
\mathscr{F}_0$ is an open countable cover of $S$. Simple
observation shows that the cover $\mathscr{C}$ does not contains a
finite subcover. This contradicts to the countable compactness of
$S$. The obtained contradiction implies the last assertion of the
theorem.
\end{proof}

Since any maximal subgroup of a compact topological semigroup $T$
is a compact subset in $T$ (see \cite[Vol.~1, Theorem~1.11]{CHK})
Theorem~\ref{theorem3.8} implies the following:

\begin{corollary}\label{corollary3.9}
Every primitive compact topological inverse semigroup $S$ is
topologically isomorphic to an orthogonal sum
$\sum_{i\in\mathscr{A}}B_{\lambda_{i}}(G_i)$ of topological Brandt
$\lambda_i$-extensions $B_{\lambda_i}(G_i)$ of compact topological
groups $G_i$ in the class of topological inverse semigroups for
some finite cardinals $\lambda_i\geqslant 1$ and the family
\begin{equation*}
 \mathscr{B}(0)=\big\{S\setminus\big(
B_{\lambda_{i_1}}(G_{i_1})\cup
B_{\lambda_{i_2}}(G_{i_2})\cup\cdots\cup
B_{\lambda_{i_n}}(G_{i_n})\big)^*\mid i_1,
i_2,\ldots,i_n\in\mathscr{A}, n\in\mathbb{N}\big\}
\end{equation*}
determines a base of the topology at zero $0$ of $S$.
\end{corollary}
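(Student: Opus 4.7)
The plan is to deduce this statement directly from Theorem~\ref{theorem3.8} together with the quoted fact that maximal subgroups of a compact topological semigroup are compact. Since every compact space is countably compact, Theorem~\ref{theorem3.8} applies to $S$, yielding a topological isomorphism onto an orthogonal sum $\sum_{i\in\mathscr{A}}B_{\lambda_{i}}(G_i)$ of topological Brandt $\lambda_i$-extensions of countably compact topological groups $G_i$ with finite $\lambda_i\geqslant 1$, and moreover giving the same description of the base $\mathscr{B}(0)$ at zero. Thus the only new content to verify is the upgrade from ``countably compact'' to ``compact'' for each $G_i$.

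To establish this, I would identify $G_i$ (up to topological isomorphism) with the subgroup $(\alpha_i, G_i, \alpha_i)$ of $B_{\lambda_i}(G_i)\subseteq S$, which is an $\mathscr{H}$-class of $S$ containing the idempotent $(\alpha_i,1_{G_i},\alpha_i)$, hence a maximal subgroup of the compact topological semigroup $S$. By \cite[Vol.~1, Theorem~1.11]{CHK}, any maximal subgroup of a compact topological semigroup is compact, so $G_i$ is compact. (Alternatively, Lemma~\ref{lemma3.4} tells us that the non-zero $\mathscr{H}$-class $(\alpha_i,G_i,\alpha_i)$ is clopen in $S$ and therefore a closed subset of the compact space $S$, again yielding compactness.)

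The description of $\mathscr{B}(0)$ as a base at $0$ is inherited verbatim from Theorem~\ref{theorem3.8}, since it depends only on the orthogonal-sum decomposition and the countable compactness of $S$, both of which are available here. I expect no real obstacle in this argument: it is essentially a one-line strengthening of Theorem~\ref{theorem3.8}, with the only subtlety being to make sure that the $G_i$ one considers inside $S$ are genuinely the maximal subgroups appearing in the quoted result from \cite{CHK}, which is clear from the structure of the Brandt extension $B_{\lambda_i}(G_i)$.
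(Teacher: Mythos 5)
Your argument is correct and is essentially the paper's own: the paper derives the corollary from Theorem~\ref{theorem3.8} (compact implies countably compact) together with the cited fact that maximal subgroups of a compact topological semigroup are compact, exactly as you do. Your alternative justification via Lemma~\ref{lemma3.4} (the $\mathscr{H}$-class $(\alpha_i,G_i,\alpha_i)$ is clopen, hence compact) is a valid bonus but not needed.
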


\begin{theorem}\label{theorem3.10}
Every primitive countably compact topological inverse semigroup
$S$ is a dense subsemigroup of a primitive compact topological
inverse semigroup.
\end{theorem}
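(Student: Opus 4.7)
The plan is to apply Theorem~\ref{theorem3.8} to decompose $S$ into manageable pieces, compactify each piece, and glue the pieces together with a one-point compactification topology at the zero. By that theorem I may assume $S=\sum_{i\in\mathscr{A}}B_{\lambda_{i}}(G_i)$, where each $G_i$ is a countably compact topological group and each $\lambda_i$ is a finite cardinal, with the neighbourhood base at $0$ as given there.

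First I would densely embed each $G_i$ into a compact topological group $\widetilde{G}_i$. Every countably compact Hausdorff space is pseudocompact, and by the Comfort--Ross theorem every pseudocompact topological group is totally bounded. Hence each $G_i$ embeds as a dense subgroup of its Weil (two-sided) completion $\widetilde{G}_i$, which is a compact topological group.

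Next I would construct the ambient semigroup $T=\sum_{i\in\mathscr{A}}B_{\lambda_{i}}(\widetilde{G}_i)$, topologised as follows: each $\bigl(B_{\lambda_{i}}(\widetilde{G}_i)\bigr)^*$ is declared clopen in $T$ and carries the Brandt-extension topology supplied by Proposition~\ref{proposition3.7}, while a base at the zero $0_T$ of $T$ is given by
\[
\mathscr{B}_T(0_T)=\bigl\{T\setminus\bigl(B_{\lambda_{i_1}}(\widetilde{G}_{i_1})\cup\cdots\cup B_{\lambda_{i_n}}(\widetilde{G}_{i_n})\bigr)^*\mid i_1,\ldots,i_n\in\mathscr{A},\ n\in\mathbb{N}\bigr\}.
\]
Since each $\lambda_i$ is finite and $\widetilde{G}_i$ is compact, each $\bigl(B_{\lambda_{i}}(\widetilde{G}_i)\bigr)^*$ is a finite disjoint union of copies of $\widetilde{G}_i$ and hence compact; so $T$ is, topologically, the one-point compactification of $\bigsqcup_{i\in\mathscr{A}}\bigl(B_{\lambda_{i}}(\widetilde{G}_i)\bigr)^*$, which is compact Hausdorff.

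Finally I would verify that $T$ is a primitive topological inverse semigroup containing $S$ as a dense subsemigroup. Continuity of multiplication and inversion at each non-zero point of $T$ is immediate from Proposition~\ref{proposition3.7} applied inside the clopen component $B_{\lambda_{i}}(\widetilde{G}_i)$; continuity at $0_T$ follows from orthogonality, since a product lands outside any prescribed finite union of components only if at least one of its factors does, and inversion preserves each component. Primitivity of $T$ is inherited from that of each $B_{\lambda_{i}}(\widetilde{G}_i)$, while the density of $S$ in $T$ reduces to the density of $G_i$ in $\widetilde{G}_i$, which holds by construction. The main obstacle is the compactification step: producing the dense embedding $G_i\hookrightarrow\widetilde{G}_i$ depends essentially on Comfort--Ross, together with the compactness of the Weil completion of a totally bounded group. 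Once this tool is in hand, the remaining verifications are routine orthogonality arguments showing that the natural one-point compactification topology on the orthogonal sum is compatible with the inverse semigroup structure.
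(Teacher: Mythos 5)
Your proposal is correct and follows essentially the same route as the paper: decompose $S$ via Theorem~\ref{theorem3.8}, densely embed each countably compact group $G_i$ into a compact group using Comfort--Ross (the paper realises this compact group as $\beta(G_i)$, you as the Weil completion of the totally bounded group $G_i$ --- the same object for pseudocompact groups), and then rebuild the orthogonal sum of the Brandt extensions with the one-point-compactification-style base at zero. The remaining verifications you describe match the paper's.
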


\begin{proof}
By Theorem~\ref{theorem3.8} the topological semigroup $S$ is
topologically isomorphic to an orthogonal sum
$\sum_{i\in\mathscr{A}}B_{\lambda_{i}}(G_i)$ of topological Brandt
$\lambda_i$-extensions $B_{\lambda_i}(G_i)$ of countably compact
topological groups $G_i$ in the class of topological inverse
semigroups for some finite cardinals $\lambda_i\geqslant 1$. Since
any countably compact topological group $G_i$ is pseudocompact,
the Comfort-Ross Theorem (see \cite[Theorem~4.1]{ComfortRoss1966})
implies that the Stone-\v{C}ech compactification $\beta(G_i)$ is a
compact topological group and the inclusion mapping $f_i$ of $G_i$
into $\beta(G_i)$ is a topological isomorphism for all
$i\in\mathscr{A}$. On the orthogonal sum
$\sum_{i\in\mathscr{A}}B_{\lambda_{i}}(G_i)$ of Brandt
$\lambda$-extensions $B_{\lambda_{i}}(\beta(G_i))$,
$i\in\mathscr{A}$, we determine a topology $\tau$ as follows:
\begin{itemize}
    \item[$(a)$] the family
         $\mathscr{B}(\alpha_i,g_i,\beta_i)=\{(\alpha_i,
         g_i\cdot U,\beta_i)\mid
         U\in\mathscr{B}_{\beta(G_i)}(e_i)\}$ is a base of the topology at the non-zero
         element $(\alpha_i,g_i,\beta_i)\in
         B_{\lambda_i}(\beta(G_i))$, where
         $\mathscr{B}_{\beta(G_i)}(e_i)$ is a base of the topology
         at the unity $e_i$ of the compact topological group
         $\beta(G_i)$; and
    \item[$(b)$] the family
\begin{equation*}
 \mathscr{B}(0)=\big\{S\setminus\big(
B_{\lambda_{i_1}}(\beta(G_{i_1}))\cup
B_{\lambda_{i_2}}(\beta(G_{i_2}))\cup\cdots\cup
B_{\lambda_{i_n}}(\beta(G_{i_n}))\big)^*\mid i_1,
i_2,\ldots,i_n\in\mathscr{A}, n\in\mathbb{N}\big\}
\end{equation*}
determines a base of the topology at zero $0$ of
$\sum_{i\in\mathscr{A}}B_{\lambda_{i}}(G_i)$.
\end{itemize}
By Theorem~II.4.3 of~\cite{Petrich1984},
$\sum_{i\in\mathscr{A}}B_{\lambda_{i}}(\beta(G_i))$ is a primitive
inverse semigroup and simple verifications show that
$\sum_{i\in\mathscr{A}}B_{\lambda_{i}}(\beta(G_i))$ with the
topology $\tau$ is a compact topological inverse semigroup.

We define a map $f\colon\sum_{i\in\mathscr{A}}B_{\lambda_{i}}(G_i)
\rightarrow\sum_{i\in\mathscr{A}}B_{\lambda_{i}}(\beta(G_i))$ as
follows:
\begin{equation*}
    f(0)=0 \qquad \mbox{and} \qquad
    f((\alpha_i,g_i,\beta_i))=(\alpha_i,f_i(g_i),\beta_i)\in
    B_{\lambda_{i}}(\beta(G_i)) \quad \mbox{for} \quad
    (\alpha_i,g_i,\beta_i)\in B_{\lambda_{i}}(G_i).
\end{equation*}
Simple verifications show that $f$ is a continuous homomorphism.
Since $f_i\colon G_i\rightarrow\beta(G_i)$ is a topological
isomorphism, we have that
$f\colon\sum_{i\in\mathscr{A}}B_{\lambda_{i}}(G_i)
\rightarrow\sum_{i\in\mathscr{A}}B_{\lambda_{i}}(\beta(G_i))$ is a
topological isomorphism too.
\end{proof}

Gutik and Repov\v{s} in \cite{GutikRepovs2007} showed that the
Stone-\v{C}ech compactification $\beta(T)$ of a $0$-simple
countably compact topological inverse semigroup $T$ is a
$0$-simple compact topological inverse semigroup. In this context
the following question arises naturally:

\begin{question}
Is the Stone-\v{C}ech compactification $\beta(T)$ of a primitive
countably compact topological inverse semigroup $T$ a topological
semigroup (a primitive topological inverse semigroup)?
\end{question}


\begin{theorem}\label{theorem3.21}
Let $S=\bigcup_{\alpha\in\mathscr{A}}S_{\alpha}$ be a topological
inverse semigroup such that
\begin{itemize}
    \item[$(i)$] $S_{\alpha}$ is an $H$-closed (resp., absolutely
         $H$-closed) semigroup in the class of topological inverse
         semigroups for any $\alpha\in\mathscr{A}$; and
    \item[$(ii)$] there exists an $H$-closed (resp., absolutely
         $H$-closed) subsemigroup $T$ of $S$ in the class of
         topological inverse semigroups such that $S_{\alpha}\cdot
         S_{\beta}\subseteq T$ for all $\alpha\neq\beta$,
         $\alpha,\beta\in\mathscr{A}$.
\end{itemize}
Then $S$ is an $H$-closed (resp., absolutely $H$-closed) semigroup
in the class of topological inverse semigroups.
\end{theorem}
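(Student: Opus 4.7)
The plan is to prove the $H$-closed version directly and then deduce the absolutely $H$-closed version by a one-line reduction. So suppose $S$ is embedded as a subsemigroup in a topological inverse semigroup $S^*$, and fix $x\in\operatorname{cl}_{S^*}(S)$; the goal is to show $x\in S$. Choose a net $(x_\iota)\subseteq S$ with $x_\iota\to x$, and write $x_\iota\in S_{\alpha(\iota)}$. Since each $S_\alpha$ and $T$ are $H$-closed in the class of topological inverse semigroups, all of them are closed in $S^*$. If for some $\beta\in\mathscr{A}$ the set $\{\iota:x_\iota\in S_\beta\}$ is cofinal, then passing to a subnet gives $x\in\operatorname{cl}_{S^*}(S_\beta)=S_\beta\subseteq S$ and we are done; hence we may assume that for every $\beta\in\mathscr{A}$, eventually $\alpha(\iota)\neq\beta$.

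The heart of the argument is a two-step use of condition (ii). First, fix an index $\iota_1$. For every $\iota$ large enough that $\alpha(\iota)\neq\alpha(\iota_1)$, condition (ii) gives $x_\iota\cdot x_{\iota_1}^{-1}\in S_{\alpha(\iota)}\cdot S_{\alpha(\iota_1)}\subseteq T$; joint continuity of multiplication yields $x_\iota\cdot x_{\iota_1}^{-1}\to x\cdot x_{\iota_1}^{-1}$, and closedness of $T$ forces $x\cdot x_{\iota_1}^{-1}\in T$ for every $\iota_1$. Now let $\iota_1$ vary: continuity of inversion in $S^*$ gives $x_{\iota_1}^{-1}\to x^{-1}$, hence $x\cdot x_{\iota_1}^{-1}\to x\cdot x^{-1}=:e$, and closedness of $T$ again gives $e\in T\subseteq S$; so $e\in S_\gamma$ for some $\gamma\in\mathscr{A}$. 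Since $S^*$ is an inverse semigroup, $e\cdot x=x$, so $e\cdot x_\iota\to x$; but for every $\iota$ with $\alpha(\iota)\neq\gamma$ we have $e\cdot x_\iota\in S_\gamma\cdot S_{\alpha(\iota)}\subseteq T$, so $x\in\operatorname{cl}_{S^*}(T)=T\subseteq S$, as required.

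The absolutely $H$-closed assertion follows by applying the $H$-closed version to a continuous homomorphic image: if $h\colon S\to S'$ is a continuous homomorphism into a topological inverse semigroup, then $h(S)=\bigcup_{\alpha\in\mathscr{A}}h(S_\alpha)$, each $h(S_\alpha)$ is $H$-closed (by absolute $H$-closedness of $S_\alpha$), $h(T)$ is $H$-closed, and $h(S_\alpha)\cdot h(S_\beta)=h(S_\alpha\cdot S_\beta)\subseteq h(T)$ for $\alpha\neq\beta$, so the already-proved first case yields $H$-closedness of $h(S)$. The main obstacle I anticipate is the diagonal argument in the second paragraph: one must carefully separate the roles of the fixed index $\iota_1$ and the running index $\iota$ when iterating the closure argument for $T$, and crucially exploit continuity of inversion (a feature of topological inverse, not merely topological, semigroups) to pass from ``$x\cdot x_{\iota_1}^{-1}\in T$ for all $\iota_1$'' to ``$e\in T$''; once $e$ is located in some $S_\gamma$, the final contradiction is immediate.
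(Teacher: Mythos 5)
Your proof is correct, and it reaches the conclusion by a noticeably different route than the paper, so a comparison is in order. The paper argues the absolutely $H$-closed case by contradiction with neighbourhoods: assuming $h(S)$ is not closed in $G=\operatorname{cl}_G(h(S))$, it picks $x\in G\setminus h(S)$, a neighbourhood $U(x)$ disjoint from the closed set $h(T)$, and neighbourhoods $V(x)$, $V(x^{-1})$ with $V(x)\cdot V(x^{-1})\cdot V(x)\subseteq U(x)$; since each of $V(x)$ and $V(x^{-1})$ must meet infinitely many of the closed sets $h(S_\beta)$, the triple product meets $h(T)$, a contradiction. It then asserts the $H$-closed case is ``similar''. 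You compute the same triple product $x\cdot x^{-1}\cdot x$ as an iterated net limit instead: first $x\cdot x_{\iota_1}^{-1}\in\operatorname{cl}(T)=T$, then $e=x\cdot x^{-1}\in T$, locate $e$ in some $S_\gamma$, and finally $x=e\cdot x=\lim e\cdot x_\iota\in\operatorname{cl}(T)=T$. Both arguments rest on the identity $x=xx^{-1}x$ together with closedness of $T$ and of each $S_\alpha$ in the ambient topological inverse semigroup; but your version is direct rather than by contradiction, replaces the ``meets infinitely many components'' step by a clean cofinality dichotomy, and --- perhaps most usefully --- derives the absolutely $H$-closed case from the $H$-closed case by passing to homomorphic images, which turns the paper's closing remark ``the proof in the case of $H$-closedness is similar'' into an actual formal reduction (with the roles of the two cases reversed). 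The only step you should make explicit is why $x_{\iota_1}^{-1}\in S_{\alpha(\iota_1)}$: hypothesis $(i)$ forces each $S_\alpha$ to lie in the class of topological inverse semigroups, hence to be an inverse subsemigroup of $S$, and uniqueness of inverses in an inverse semigroup guarantees that the inverse taken in $S_\alpha$ agrees with the one taken in $S$ and in $S^*$; the same remark is needed for $h(S_\alpha)$ in your reduction. With that sentence added, the argument is complete.
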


\begin{proof}
We consider the case of absolute $H$-closedness only.

Suppose on the contrary that there exist a topological inverse
semigroup $G$ and a continuous homomorphism  $h\colon S\rightarrow
G$ such that $h(S)$ is not closed subsemigroup in $G$. Without
loss of generality we can assume that
$\operatorname{cl}_G(h(S))=G$. Thus, by Proposition~II.2 of
\cite{EberhartSelden1969}, $G$ is a topological inverse semigroup.

Then, $G\setminus h(S)\neq\varnothing$. Let $x\in G\setminus
h(S)$. Since $S$ and $G$ are topological inverse semigroups we
have that $h(S)$ is an inverse subsemigroup in $G$ and hence
$x^{-1}\in G\setminus h(S)$. The semigroup $T$ is an absolutely
$H$-closed semigroup in the class of topological inverse
semigroups implies that there exists an open neighbourhood $U(x)$
of the point $x$ in $T$ such that $U(x)\cap h(T)=\varnothing$.
Since $G$ is a topological inverse semigroup there exist open
neighbourhoods $V(x)$ and $V(x^{-1})$ of the points $x$ and
$x^{-1}$ in $G$, respectively, such that $V(x)\cdot V(x^{-1})\cdot
V(x)\subseteq U(x)$. But
$x,x^{-1}\in\operatorname{cl}_G(h(S))\setminus h(S)$ and since
$\{S_{\alpha}\mid \alpha\in\mathscr{A}\}$ is the family of
absolutely $H$-closed semigroups in the class of topological
inverse semigroups, each of the neighbourhoods $V(x)$ and
$V(x^{-1})$ intersects infinitely many subsemigroups
$h(S_{\beta})$ in $G$, $\beta\in\mathscr{A}$. Hence,
$\big(V(x)\cdot V(x^{-1})\cdot V(x)\big)\cap h(T)\neq\varnothing$.
This contradicts the assumption that $U(x)\cap h(T)=\varnothing$.
The obtained contradiction implies that $S$ is an absolutely
$H$-closed semigroup in the class of topological inverse
semigroups.

The proof in the case of $H$-closeness is similar to the previous
one.
\end{proof}

Theorem~\ref{theorem3.21} implies:

\begin{corollary}\label{corollary3.22}
Let $S=\bigcup_{\alpha\in\mathscr{A}}S_{\alpha}$ be an inverse
semigroup such that
\begin{itemize}
    \item[$(i)$] $S_{\alpha}$ is an algebraically closed
         (resp., algebraically $h$-closed)
         semigroup in the class of topological inverse semigroups
         for any $\alpha\in\mathscr{A}$; and
    \item[$(ii)$] there exists an algebraically closed
         (resp., algebraically $h$-closed)
         subsemigroup $T$ of $S$ in the class of topological
         inverse semigroups such that $S_{\alpha}\cdot
         S_{\beta}\subseteq T$ for all $\alpha\neq\beta$,
         $\alpha,\beta\in\mathscr{A}$.
\end{itemize}
Then $S$ is an algebraically closed (resp., algebraically
$h$-closed) semigroup in the class of topological inverse
semigroups.
\end{corollary}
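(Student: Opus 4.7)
The plan is to reduce Corollary~\ref{corollary3.22} directly to Theorem~\ref{theorem3.21} by unwrapping the definitions of algebraically closed and algebraically $h$-closed, and showing that the hypotheses on each piece of the cover transfer to whatever topology we impose on $S$.

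For the algebraically closed case, I would let $\tau$ be an arbitrary semigroup topology on $S$ such that $(S,\tau)$ is a topological inverse semigroup, and restrict $\tau$ to each $S_{\alpha}$ and to $T$. Each restriction $\tau|_{S_{\alpha}}$ (respectively $\tau|_T$) is a semigroup topology on an inverse subsemigroup, so the hypothesis that $S_{\alpha}$ and $T$ are algebraically closed in the class of topological inverse semigroups yields that $(S_{\alpha},\tau|_{S_{\alpha}})$ and $(T,\tau|_T)$ are topological inverse semigroups which are $H$-closed in the class of topological inverse semigroups. Now the algebraic containment $S_{\alpha}\cdot S_{\beta}\subseteq T$ for $\alpha\neq\beta$ is independent of topology, so the two hypotheses of Theorem~\ref{theorem3.21} are satisfied. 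Applying that theorem gives that $(S,\tau)$ is $H$-closed in the class of topological inverse semigroups. Since $\tau$ was arbitrary, $S$ is algebraically closed in that class.

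For the algebraically $h$-closed case, I would instead equip $S$ with the discrete topology $\mathfrak{d}$. Each subspace $(S_{\alpha},\mathfrak{d}|_{S_{\alpha}})$ and $(T,\mathfrak{d}|_T)$ is then discrete, and the algebraic $h$-closedness of $S_{\alpha}$ and $T$ gives that these discrete semigroups lie in the class of topological inverse semigroups and are absolutely $H$-closed in it. Invoke the absolute $H$-closed version of Theorem~\ref{theorem3.21} applied to $(S,\mathfrak{d})$, with $\{(S_{\alpha},\mathfrak{d}|_{S_{\alpha}})\}$ and $(T,\mathfrak{d}|_T)$ playing the roles from that theorem; the multiplicative containment carries over unchanged. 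This yields that $(S,\mathfrak{d})$ is absolutely $H$-closed in the class of topological inverse semigroups, hence $S$ is algebraically $h$-closed in this class.

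The argument is essentially bookkeeping once Theorem~\ref{theorem3.21} is in hand, so there is no serious obstacle. The only mild subtlety is verifying that, in the algebraically closed case, an arbitrary semigroup topology $\tau$ on $S$ really does turn $(S,\tau)$ into a topological inverse semigroup so that Theorem~\ref{theorem3.21} is even applicable; this follows because continuity of inversion on $S$ can be read off pointwise from the fact that the restriction $\tau|_{S_{\alpha}}$ is an inverse semigroup topology (granted by algebraic closedness of each $S_{\alpha}$) together with the cover $S=\bigcup_{\alpha}S_{\alpha}$ and Proposition~II.2 of \cite{EberhartSelden1969} applied to the subsemigroups. Once this is noted, the proof reduces to a single invocation of Theorem~\ref{theorem3.21} in each of the two cases.
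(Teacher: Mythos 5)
Your reduction is exactly the paper's intended argument: the paper offers no proof of Corollary~\ref{corollary3.22} beyond the words ``Theorem~\ref{theorem3.21} implies,'' and your two instantiations (an arbitrary inverse semigroup topology $\tau$ for the algebraically closed case, the discrete topology $\mathfrak{d}$ for the algebraically $h$-closed case) supply precisely the missing bookkeeping; note also that each $S_{\alpha}$ and $T$, being an inverse semigroup sitting inside the inverse semigroup $S$, is automatically an \emph{inverse} subsemigroup by uniqueness of inverses, so the restricted topologies are indeed inverse semigroup topologies.

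The one place you go astray is the final paragraph. Continuity of the inversion on $(S,\tau)$ does \emph{not} follow from its continuity on each $(S_{\alpha},\tau|_{S_{\alpha}})$: a map that is continuous on every member of a cover need not be continuous unless the cover is open (or locally finite and closed), and nothing forces the subsemigroups $S_{\alpha}$ to be open in $S$. Fortunately that step is not needed. As the juxtaposition of Theorem~\ref{theorem3.25}$(ii)$ with Corollary~\ref{corollary3.26}$(ii)$ shows, ``algebraically closed in the class of topological inverse semigroups'' is to be read as quantifying only over those topologies $\tau$ for which $(S,\tau)$ is already a topological inverse semigroup, so Theorem~\ref{theorem3.21} is applicable without any extra verification. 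With that paragraph deleted, your proof is correct and coincides with the paper's.
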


Theorem~\ref{theorem3.21} implies:

\begin{theorem}\label{theorem3.23}
Let a topological inverse semigroup $S$ be an orthogonal sum of
the family $\{ S_{\alpha}\}_{\alpha\in\mathscr{A}}$ of $H$-closed
(resp., absolutely $H$-closed) topological inverse semigroups with
zeros in the class of topological inverse semigroups. Then $S$ is
an $H$-closed (resp., absolutely $H$-closed) topological inverse
semigroup in the class of topological inverse semigroups.
\end{theorem}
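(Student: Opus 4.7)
The plan is to deduce Theorem~\ref{theorem3.23} as an immediate corollary of Theorem~\ref{theorem3.21} by taking the common zero as the distinguished subsemigroup $T$. Since $S=\bigcup_{\alpha\in\mathscr{A}}S_{\alpha}$ is an orthogonal sum of semigroups with zeros, distinct summands share only the zero $0$ of $S$, and $S_{\alpha}\cdot S_{\beta}=\{0\}$ whenever $\alpha\neq\beta$. This identifies $T=\{0\}$ as a natural candidate for the hypothesis $S_{\alpha}\cdot S_{\beta}\subseteq T$ in Theorem~\ref{theorem3.21}.

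First I would check that the one-element semigroup $T=\{0\}$ is absolutely $H$-closed (and in particular $H$-closed) in the class of topological inverse semigroups. In any Hausdorff topological semigroup a singleton is a closed subset, so $T$ is $H$-closed; and every continuous homomorphic image of $T$ into a topological inverse semigroup is again a singleton, hence closed. So condition (ii) of Theorem~\ref{theorem3.21} is satisfied with this choice of $T$.

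Second, condition (i) of Theorem~\ref{theorem3.21} is exactly the hypothesis of Theorem~\ref{theorem3.23}, namely that each $S_{\alpha}$ is (absolutely) $H$-closed in the class of topological inverse semigroups. Applying Theorem~\ref{theorem3.21} then yields that $S$ itself is (absolutely) $H$-closed in the class of topological inverse semigroups, which is the desired conclusion.

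There is essentially no obstacle in this argument: the whole force of the result is already packed into Theorem~\ref{theorem3.21}, and the orthogonal sum structure makes the role of $T$ degenerate. The only point requiring explicit mention is the observation that the trivial semigroup $\{0\}$ qualifies as an absolutely $H$-closed topological inverse semigroup, together with the reminder that in an orthogonal sum the ``crossed'' products $S_{\alpha}\cdot S_{\beta}$ collapse into $\{0\}$; once these are noted, the proof reduces to citing Theorem~\ref{theorem3.21}.
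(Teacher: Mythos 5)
Your proposal is correct and is precisely the derivation the paper intends: the paper states Theorem~\ref{theorem3.23} as an immediate consequence of Theorem~\ref{theorem3.21} without spelling out the details, and your choice of $T=\{0\}$ (closed in any Hausdorff space, hence absolutely $H$-closed) together with the observation that $S_{\alpha}\cdot S_{\beta}=\{0\}$ for $\alpha\neq\beta$ in an orthogonal sum is exactly the right way to verify its hypotheses.
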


Corollary~\ref{corollary3.22} implies

\begin{corollary}\label{corollary3.24}
Let an inverse semigroup $S$ be an orthogonal sum of the family
$\{ S_{\alpha}\}_{\alpha\in\mathscr{A}}$ of algebraically closed
(resp., algebraically $h$-closed) inverse semigroups with zeros in
the class of topological inverse semigroups. Then $S$ is an
algebraically closed (resp., algebraically $h$-closed) inverse
semigroup in the class of topological inverse semigroups.
\end{corollary}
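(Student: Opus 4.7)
The plan is to deduce Corollary~\ref{corollary3.24} as an immediate specialization of Corollary~\ref{corollary3.22} with the auxiliary subsemigroup $T$ taken to be the zero $\{0_S\}$ of $S$. The point is that by the very definition of an orthogonal sum of semigroups with zeros, the zeros of the summands are identified to a common zero $0_S$, and $S_{\alpha}\cdot S_{\beta}=\{0_S\}$ whenever $\alpha\neq\beta$. Consequently condition $(ii)$ of Corollary~\ref{corollary3.22} is automatic once one knows that $T=\{0_S\}$ is itself algebraically closed (respectively algebraically $h$-closed) in the class of topological inverse semigroups.

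For this last point, I would observe that a one-element semigroup admits a unique Hausdorff topology, which is simultaneously discrete and compact, and that a singleton is closed in every Hausdorff topological semigroup containing it; hence $\{0_S\}$ is $H$-closed in the class of topological inverse semigroups. Any continuous homomorphic image of a singleton is again a singleton, so by the same argument $\{0_S\}$ is absolutely $H$-closed. Since the discrete topology is the only semigroup topology on $\{0_S\}$, this shows that the zero subsemigroup is both algebraically closed and algebraically $h$-closed in the class of topological inverse semigroups, as required.

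With both hypotheses of Corollary~\ref{corollary3.22} verified (hypothesis $(i)$ being precisely the assumption on the summands $S_{\alpha}$, and hypothesis $(ii)$ being provided by the trivial choice $T=\{0_S\}$), its conclusion yields exactly the statement of Corollary~\ref{corollary3.24}. There is no genuine obstacle: the entire content lies in making the right choice of $T$, and the only thing to be checked — the closedness properties of the one-point semigroup — is essentially a tautology.
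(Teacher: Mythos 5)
Your proposal is correct and matches the paper's intent exactly: the paper derives Corollary~\ref{corollary3.24} directly from Corollary~\ref{corollary3.22}, and the only detail to supply is the choice $T=\{0_S\}$ together with the (tautological) observation that a one-point subsemigroup is algebraically closed and algebraically $h$-closed, since a singleton is closed in any Hausdorff space and any continuous homomorphic image of it is again a singleton. Your verification of condition $(ii)$ via $S_{\alpha}\cdot S_{\beta}=\{0_S\}$ for $\alpha\neq\beta$ is precisely the defining property of an orthogonal sum, so nothing is missing.
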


Recall \cite{Aleksandrov1942}, a topological group $G$ is called
\emph{absolutely closed} if $G$ is a closed subgroup of any
topological group which contains $G$ as a subgroup. In our
terminology such topological groups are called $H$-closed in the
class of topological groups. In \cite{Raikov1946} Raikov proved
that a topological group $G$ is absolutely closed if and only if
it is Raikov complete, i.~e. $G$ is complete with respect to the
two sided uniformity.

A topological group $G$ is called \emph{$h$-complete} if for every
continuous homomorphism $f\colon G\rightarrow H$ into a
topological group $H$ the subgroup $f(G)$ of $H$ is
closed~\cite{DikranjanUspenskij1998}. The $h$-completeness is
preserved under taking products and closed central
subgroups~\cite{DikranjanUspenskij1998}.

Gutik and Pavlyk in \cite{GutikPavlyk2003} showed that a
topological group $G$ is $H$-closed (resp., absolutely $H$-closed)
in the class of topological inverse semigroups if and only if $G$
is absolutely closed (resp., $h$-complete).

\begin{theorem}\label{theorem3.25}
For a primitive topological inverse semigroup $S$ the following
assertions are equivalent:
\begin{itemize}
    \item[$(i)$] every maximal subgroup of $S$ is absolutely
         closed;
    \item[$(ii)$] the semigroup $S$ with every inverse semigroup
     topology $\tau$ is $H$-closed in the class of topological
     inverse semigroups.
\end{itemize}
\end{theorem}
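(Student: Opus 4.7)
My plan is to reduce the theorem to the orthogonal-sum decomposition $S=\sum_{i\in\mathscr{A}}B_{\lambda_{i}}(G_i)$ of Theorem~II.4.3 of~\cite{Petrich1984} and then chain three equivalences already recorded earlier in the paper: (a) the Gutik--Pavlyk theorem from~\cite{GutikPavlyk2003}, which identifies ``$G$ absolutely closed'' with ``$G$ $H$-closed in the class of topological inverse semigroups''; (b) the Gutik--Pavlyk equivalence of~\cite{GutikPavlyk2001}, which states that $G$ enjoys the latter property if and only if every topological Brandt $\lambda$-extension $B_\lambda(G)$ is $H$-closed in that class; and (c) Theorem~\ref{theorem3.23}, which propagates $H$-closedness from summands with zero to an orthogonal sum. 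I read both conditions as implicit universal statements: in (i) each maximal subgroup is to be absolutely closed under every Hausdorff group topology it inherits from some inverse semigroup topology on $S$.

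For $(i)\Rightarrow(ii)$, fix any inverse semigroup topology $\tau$ on $S$. Lemma~\ref{lemma3.4} makes every non-zero $\mathscr{H}$-class of $(S,\tau)$ clopen, so each summand $B_{\lambda_{i}}(G_i)$, being the union of its non-zero $\mathscr{H}$-classes with $0$ adjoined, is a closed subsemigroup of $(S,\tau)$ and hence a topological inverse semigroup in the induced topology. Since $B_{\lambda_i}(G_i)$ is completely $0$-simple, Proposition~\ref{proposition3.7} identifies it with a topological Brandt $\lambda_i$-extension of the topological group $(G_i,\tau|_{G_i})$. By (i), $(G_i,\tau|_{G_i})$ is absolutely closed; by (a), it is $H$-closed in the class of topological inverse semigroups; by (b), so is $B_{\lambda_i}(G_i)$; and finally (c) applied to $S=\sum_{i\in\mathscr{A}}B_{\lambda_i}(G_i)$ delivers $H$-closedness of $(S,\tau)$.

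For $(ii)\Rightarrow(i)$ I argue contrapositively: suppose that some maximal subgroup $G_{i_0}$ admits a Hausdorff group topology $\tau_G$ under which it is not absolutely closed, and pick a topological group $(H,\sigma)$ containing $(G_{i_0},\tau_G)$ as a non-closed topological subgroup. Form the abstract primitive inverse semigroup $S'=B_{\lambda_{i_0}}(H)+\sum_{i\neq i_0}B_{\lambda_i}(G_i)$ and topologize it as in the proof of Theorem~\ref{theorem3.10}: give each $G_i$ with $i\neq i_0$ the discrete topology; equip each non-zero $\mathscr{H}$-class with the translated topology of $\sigma$, or of the chosen discrete topology, following Proposition~\ref{proposition3.7}(ii); and take as a neighbourhood basis at $0$ the complements of finite unions of the non-zero parts of finitely many summands, as in Corollary~\ref{corollary3.9}. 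Then $S$ inherits from $S'$ an inverse semigroup topology whose restriction to $G_{i_0}$ coincides with $\tau_G$, yet $S$ fails to be closed in $S'$ because $G_{i_0}$ is not closed in $H$; this contradicts (ii). The main obstacle is the verification that this ad hoc topology on $S'$ really satisfies joint continuity of the semigroup operation and of inversion at and across $0$ — but this is exactly the check already performed in the proof of Theorem~\ref{theorem3.10}, and it goes through verbatim without any appeal to (countable) compactness.
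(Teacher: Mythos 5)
Your proof is correct and follows essentially the same route as the paper: decompose $S$ into an orthogonal sum of topological Brandt $\lambda_i$-extensions, combine the Gutik--Pavlyk results on $H$-closedness of Brandt extensions of absolutely closed groups with Theorem~\ref{theorem3.23} for $(i)\Rightarrow(ii)$, and for $(ii)\Rightarrow(i)$ embed $S$ as a dense proper subsemigroup of $B_{\lambda_{i_0}}(H)+\sum_{i\neq i_0}B_{\lambda_i}(G_i)$. The only cosmetic differences are that the paper makes the zero an isolated point of the enlarged semigroup (which trivializes the continuity checks) rather than reusing the cofinite-summand base at zero from Theorem~\ref{theorem3.10}, and keeps the given topologies on the $G_i$, $i\neq i_0$, instead of discretizing them; your explicit universal reading of condition $(i)$ is the interpretation the paper's own argument implicitly relies on.
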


\begin{proof}
$(i)\Rightarrow(ii)$ Suppose that a primitive topological inverse
semigroup $S$ is an orthogonal sum
$\sum_{i\in\mathscr{A}}B_{\lambda_{i}}(G_i)$ of topological Brandt
$\lambda_i$-extensions $B_{\lambda_i}(G_i)$ of topological groups
$G_i$ in the class of topological inverse semigroups and every
topological group $G_i$ is absolutely closed. Then, by Theorem~3
of \cite{GutikPavlyk2001} any topological Brandt
$\lambda_i$-extension $B_{\lambda_i}(G_i)$ of topological group
$G_i$ is $H$-closed in the class of topological inverse
semigroups. Theorem~\ref{theorem3.23} implies that $S$ is an
$H$-closed topological inverse semigroup in the class of
topological inverse semigroups.

$(ii)\Rightarrow(i)$  Let $G$ be any maximal non-zero subgroup of
$S$. Since $S$ is a primitive topological inverse semigroup we
have that $S$ is an orthogonal sum
$\sum_{i\in\mathscr{A}}B_{\lambda_{i}}(G_i)$ of Brandt
$\lambda$-extensions $B_{\lambda_{i}}(G_i)$ of topological groups
$G_i$ and hence there exists a topological Brandt
$\lambda_{i_0}$-extension $B_{\lambda_{i_0}}(G_{i_0})$,
$i\in\mathscr{A}$, such that $B_{\lambda_{i_0}}(G_{i_0})$ contains
the maximal subgroup $G$ and $B_{\lambda_{i_0}}(G_{i_0})$ is a
subsemigroup of $S$.

Suppose on the contrary that the topological group
$G=G\sb{i\sb{0}}$ is not absolutely closed. Then there exists a
topological group $H$ which contains $G$ as a dense proper
subgroup. For every $i\in\mathscr{A}$ we put
\begin{equation*}
    H\sb{i}=
\left\{%
\begin{array}{ll}
    G\sb{i}, & \hbox{if~}\; i\neq i\sb{0}; \\
    H,       & \hbox{if~}\; i=i\sb{0}. \\
\end{array}%
\right.
\end{equation*}
On the orthogonal sum $\sum_{i\in\mathscr{A}}B_{\lambda_{i}}(H_i)$
of Brandt $\lambda$-extensions $B_{\lambda_{i}}(H_i)$,
$i\in\mathscr{A}$, we determine a topology $\tau\sb{0}$ as
follows:
\begin{itemize}
    \item[$(a)$] the family
         $\mathscr{B}(\alpha_i,g_i,\beta_i)=\{(\alpha_i,
         g_i\cdot U,\beta_i)\mid
         U\in\mathscr{B}_{H_i}(e_i)\}$ is a base of the topology at the non-zero
         element $(\alpha_i,g_i,\beta_i)\in
         B_{\lambda_i}(H_i)$, where $\mathscr{B}_{H_i}(e_i)$ is a base of the topology
         at the unity $e_i$ of the topological group $H_i$; and
    \item[$(b)$] the zero $0$ is an isolated point in
$\left(\sum_{i\in\mathscr{A}}B_{\lambda_{i}}(H_i),\tau\sb{0}\right)$.
\end{itemize}
By Theorem~II.4.3 of~\cite{Petrich1984},
$\sum_{i\in\mathscr{A}}B_{\lambda_{i}}(H_i)$ is a primitive
inverse semigroup and simple verifications show that
$\sum_{i\in\mathscr{A}}B_{\lambda_{i}}(H_i)$ with the topology
$\tau\sb{0}$ is a topological inverse semigroup. Also we observe
that the semigroup $\sum_{i\in\mathscr{A}}B_{\lambda_{i}}(G_i)$
with the induced from
$\left(\sum_{i\in\mathscr{A}}B_{\lambda_{i}}(H_i),\tau\sb{0}\right)$
topology is a topological inverse semigroup which is a dense
proper inverse subsemigroup of
$\left(\sum_{i\in\mathscr{A}}B_{\lambda_{i}}(H_i),\tau\sb{0}\right)$.
The obtained contradiction completes the statement of the theorem.
\end{proof}

Theorem~\ref{theorem3.25} implies

\begin{corollary}\label{corollary3.26}
For a primitive inverse semigroup $S$ the following assertions are
equivalent:
\begin{itemize}
    \item[$(i)$] every maximal subgroup of $S$ is algebraically
      closed in the class of topological inverse semigroups;
    \item[$(ii)$] the semigroup $S$ is algebraically
      closed in the class of topological inverse semigroups.
\end{itemize}
\end{corollary}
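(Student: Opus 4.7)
The plan is to derive Corollary~\ref{corollary3.26} as a direct consequence of Theorem~\ref{theorem3.25}, bridging the two statements via the Gutik--Pavlyk characterization of $H$-closedness for topological groups in the class of topological inverse semigroups (cited just before Theorem~\ref{theorem3.25}): a topological group is $H$-closed in this class if and only if it is absolutely closed. Note that for a group $G$, the inverse semigroup topologies on $G$ are exactly the group topologies, so $G$ is algebraically closed in the class of topological inverse semigroups if and only if every group topology on $G$ is absolutely closed.

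For the implication $(i)\Rightarrow(ii)$, I would fix any inverse semigroup topology $\tau$ on $S$ and show directly that the hypothesis of part $(i)$ of Theorem~\ref{theorem3.25} is satisfied by $(S,\tau)$. Indeed, every maximal subgroup $G$ of $S$ inherits from $\tau$ a group topology $\tau|_G$. By assumption $G$ is algebraically closed in the class of topological inverse semigroups, so $(G,\tau|_G)$ is $H$-closed in that class and hence absolutely closed by the Gutik--Pavlyk result. Theorem~\ref{theorem3.25} then yields that $(S,\tau)$ is $H$-closed in the class of topological inverse semigroups, and since $\tau$ was arbitrary, $S$ is algebraically closed in this class.

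For the converse $(ii)\Rightarrow(i)$, fix a maximal subgroup $G$ of $S$ and an arbitrary group topology $\tau_G$ on $G$; I aim to conclude that $(G,\tau_G)$ is absolutely closed. By Theorem~II.4.3 of \cite{Petrich1984} we may write $S=\sum_{i\in\mathscr{A}}B_{\lambda_i}(G_i)$, and without loss of generality $G=G_{i_0}$ for some distinguished $i_0\in\mathscr{A}$. The key step is to lift $\tau_G$ to an inverse semigroup topology $\tau$ on the whole of $S$, which I do by mimicking the construction in the proof of Theorem~\ref{theorem3.25}: endow each $G_i$ with $i\neq i_0$ with the discrete topology and $G_{i_0}$ with $\tau_G$, use the bases $\mathscr{B}(\alpha_i,g_i,\beta_i)=\{(\alpha_i,g_i\cdot U,\beta_i)\mid U\in\mathscr{B}_{G_i}(e_i)\}$ on each Brandt extension, and declare the zero $0$ to be isolated. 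As shown in the proof of Theorem~\ref{theorem3.25}, this produces a topological inverse semigroup whose restriction to $G$ reproduces $\tau_G$.

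Having made $S$ into a topological inverse semigroup with $\tau|_G=\tau_G$, I would invoke (ii) to conclude that $(S,\tau)$ is $H$-closed in the class of topological inverse semigroups. Theorem~\ref{theorem3.25} then guarantees that every maximal subgroup, and in particular $(G,\tau_G)$, is absolutely closed, and applying the Gutik--Pavlyk result once more converts this back into $H$-closedness of $(G,\tau_G)$ in the class of topological inverse semigroups. Since $\tau_G$ was arbitrary, $G$ is algebraically closed in this class. The main (minor) obstacle is the lifting step: one must verify that the described topology on $S$ really is an inverse semigroup topology, but this is essentially routine since the discrete summands force no new constraints on continuity and the construction is already checked in Theorem~\ref{theorem3.25}.
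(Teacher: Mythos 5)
Your proposal is correct and follows the same route as the paper, which derives Corollary~\ref{corollary3.26} directly from Theorem~\ref{theorem3.25} (the paper offers no further detail beyond ``Theorem~\ref{theorem3.25} implies''). Your fleshing-out --- identifying inverse semigroup topologies on a maximal subgroup with group topologies via the Gutik--Pavlyk characterization, and lifting an arbitrary group topology on $G_{i_0}$ to an inverse semigroup topology on $S$ by the same construction used in the proof of Theorem~\ref{theorem3.25} --- is exactly the intended argument.
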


\begin{theorem}\label{theorem3.27}
For a primitive topological inverse semigroup $S$ the following
assertions are equivalent:
\begin{itemize}
    \item[$(i)$] every maximal subgroup of $S$ is $h$-complete;
    \item[$(ii)$] the semigroup $S$ with every inverse semigroup
     topology $\tau$ is absolutely $H$-closed in the class of
     topological inverse semigroups.
\end{itemize}
\end{theorem}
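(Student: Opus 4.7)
The plan is to follow the template of Theorem~\ref{theorem3.25}, replacing ``$H$-closed/absolutely closed'' by ``absolutely $H$-closed/$h$-complete'' throughout, and replacing the extension ingredient $H \supseteq G$ by a continuous homomorphism $f\colon G \to H$ with dense proper image.

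For $(i)\Rightarrow(ii)$, I would start from the structure given by Theorem~\ref{theorem3.23}: $S$ is (topologically isomorphic to) an orthogonal sum $\sum_{i\in\mathscr{A}}B_{\lambda_i}(G_i)$ where each $G_i$ is a maximal subgroup. By the Gutik--Pavlyk characterization recalled just before the statement, each $h$-complete $G_i$ is absolutely $H$-closed in the class of topological inverse semigroups. Applying the ``$(i)\Leftrightarrow(iii)$'' equivalence recalled after Definition~\ref{def2} (for absolute $H$-closedness), every topological Brandt $\lambda_i$-extension $B_{\lambda_i}(G_i)$ is absolutely $H$-closed in the class of topological inverse semigroups. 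Theorem~\ref{theorem3.23} then yields that $S$ itself is absolutely $H$-closed in that class, for every inverse semigroup topology on $S$ compatible with the orthogonal sum structure.

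For $(ii)\Rightarrow(i)$, I argue by contraposition exactly as in Theorem~\ref{theorem3.25}. Suppose some maximal subgroup $G = G_{i_0}$ of $S$ is not $h$-complete. Then there exists a continuous homomorphism $f_{i_0}\colon G_{i_0} \to H$ into a topological group $H$ whose image is not closed; passing to the closure, I may assume $f_{i_0}(G_{i_0})$ is a dense proper subgroup of $H$. Define $H_i = G_i$ for $i\neq i_0$ and $H_{i_0} = H$, and put maps $f_i = \operatorname{id}_{G_i}$ for $i\neq i_0$. On $\sum_{i\in\mathscr{A}} B_{\lambda_i}(H_i)$ I introduce the topology $\tau_0$ of Theorem~\ref{theorem3.25}: base at a non-zero $(\alpha_i, h_i, \beta_i)$ given by $\{(\alpha_i, h_i\cdot U, \beta_i) \mid U\in\mathscr{B}_{H_i}(e_i)\}$, and with $0$ as an isolated point. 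As in Theorem~\ref{theorem3.25}, Theorem~II.4.3 of \cite{Petrich1984} and a routine check show $(\sum_{i\in\mathscr{A}} B_{\lambda_i}(H_i), \tau_0)$ is a topological inverse semigroup. Equip $S$ with the analogous topology $\tau$ using the $G_i$ (again with $0$ isolated); this is an inverse semigroup topology on $S$, so by hypothesis $(S,\tau)$ is absolutely $H$-closed in the class of topological inverse semigroups.

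Next I define the map
\begin{equation*}
F\colon \Big(\sum_{i\in\mathscr{A}} B_{\lambda_i}(G_i),\tau\Big) \longrightarrow \Big(\sum_{i\in\mathscr{A}} B_{\lambda_i}(H_i),\tau_0\Big), \qquad F(0)=0,\quad F((\alpha_i,g_i,\beta_i))=(\alpha_i,f_i(g_i),\beta_i).
\end{equation*}
Because each $f_i$ is a group homomorphism and the orthogonal sum multiplication is performed componentwise (with cross-products landing at $0$), $F$ is a semigroup homomorphism. Continuity at non-zero points follows from the product form of the bases $\mathscr{B}(\alpha_i,g_i,\beta_i)$ and the continuity of $f_{i_0}$; continuity at $0$ is trivial since $0$ is isolated in $(S,\tau)$. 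Hence $F$ is a continuous homomorphism into a topological inverse semigroup. Finally, density of $f_{i_0}(G_{i_0})$ in $H$ together with the description of the basic neighbourhoods in $B_{\lambda_{i_0}}(H)$ shows that for any $h\in H\setminus f_{i_0}(G_{i_0})$, the point $(\alpha_{i_0},h,\beta_{i_0})$ lies in $\operatorname{cl}(F(S))\setminus F(S)$, so $F(S)$ is not closed in its target. This contradicts the assumption that $(S,\tau)$ is absolutely $H$-closed in the class of topological inverse semigroups, giving the desired conclusion.

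The main obstacle is the packaging rather than any serious new idea: one must verify that $\tau_0$ really is an inverse semigroup topology (the same verification as in Theorem~\ref{theorem3.25}) and that $F$ is continuous when $f_{i_0}$ is merely a continuous homomorphism rather than a topological embedding. Both checks are routine once the bases are written out, because multiplication and inversion in a Brandt $\lambda$-extension act coordinatewise on the group coordinate, and the zero coordinate is handled by the isolatedness of $0$.
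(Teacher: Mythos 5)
Your proof is correct and follows essentially the same route as the paper's: for $(i)\Rightarrow(ii)$ you combine the Gutik--Pavlyk equivalence ($h$-complete $\Leftrightarrow$ absolutely $H$-closed in the class of topological inverse semigroups) with the Brandt-extension and orthogonal-sum closure results, and for $(ii)\Rightarrow(i)$ you construct the isolated-zero inverse semigroup topology and exhibit a continuous homomorphism with dense, non-closed image. The only cosmetic difference is in the second direction: the paper collapses all summands other than $B_{\lambda_{i_0}}(G_{i_0})$ to zero and maps $S$ onto the single Brandt extension $B_{\lambda_{i_0}}(H)$, whereas you map into the full orthogonal sum $\sum_{i\in\mathscr{A}}B_{\lambda_i}(H_i)$ via identity maps on the other summands; both yield the same contradiction.
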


\begin{proof}
$(i)\Rightarrow(ii)$ Suppose that a primitive topological inverse
semigroup $S$ is an orthogonal sum
$\sum_{i\in\mathscr{A}}B_{\lambda_{i}}(G_i)$ of topological Brandt
$\lambda_i$-extensions $B_{\lambda_i}(G_i)$ of topological groups
$G_i$ in the class of topological inverse semigroups and every
topological group $G_i$ is $h$-complete. Then by Theorem~14 of
\cite{GutikPavlyk2003} any topological Brandt
$\lambda_i$-extension $B_{\lambda_i}(G_i)$ of topological group
$G_i$ is absolutely $H$-closed in the class of topological inverse
semigroups. Theorem~\ref{theorem3.23} implies that $S$ is an
absolutely $H$-closed topological inverse semigroup in the class
of topological inverse semigroups.

$(ii)\Rightarrow(i)$ Let $G$ be any maximal non-zero subgroup of
$S$. Since $S$ is a primitive topological inverse semigroup, $S$
is an orthogonal sum $\sum_{i\in\mathscr{A}}B_{\lambda_{i}}(G_i)$
of Brandt $\lambda$-extensions $B_{\lambda_{i}}(G_i)$ of
topological groups $G_i$. Hence there exists a topological Brandt
$\lambda_{i_0}$-extension $B_{\lambda_{i_0}}(G_{i_0})$,
$i\in\mathscr{A}$, such that $B_{\lambda_{i_0}}(G_{i_0})$ contains
the maximal subgroup $G$ and $B_{\lambda_{i_0}}(G_{i_0})$ is a
subsemigroup of $S$.

Suppose on the contrary that the topological group
$G=G\sb{i\sb{0}}$ is not $h$-completed. Then there exist a
topological group $H$ and continuous homomorphism $h\colon
G\rightarrow H$ such that $h(G)$ is a dense proper subgroup of
$H$. On the Brandt $\lambda$-extension $B_{\lambda_{i\sb{0}}}(H)$,
we determine a topology $\tau\sb{H}$ as follows:
\begin{itemize}
    \item[$(a)$] the family
         $\mathscr{B}(\alpha_{i\sb{0}},g_{i\sb{0}},\beta_{i\sb{0}})=\{(\alpha_{i\sb{0}},
         g_i\cdot U,\beta_{i\sb{0}})\mid
         U\in\mathscr{B}_{H}(e)\}$ is a base of the topology at the non-zero
         element $(\alpha_{i\sb{0}},g_i,\beta_{i\sb{0}})\in
         B_{\lambda_i}(H)$, where $\mathscr{B}_{H}(e)$ is a base of the topology
         at the unity $e$ of the topological group $H$; and
    \item[$(b)$] the zero $0$ is an isolated point in
$\left(B_{\lambda_{{i\sb{0}}}}(H),\tau\sb{H}\right)$.
\end{itemize}
Then $B_{\lambda_{{i\sb{0}}}}(H)$ is an inverse semigroup and
simple verifications show that $B_{\lambda_{{i\sb{0}}}}(H)$ with
the topology $\tau\sb{H}$ is a topological inverse semigroup.

On the orthogonal sum $\sum_{i\in\mathscr{A}}B_{\lambda_{i}}(G_i)$
of Brandt $\lambda$-extensions $B_{\lambda_{i}}(G_i)$,
$i\in\mathscr{A}$, we determine a topology $\tau\sb{\star}$ as
follows:
\begin{itemize}
    \item[$(a)$] the family
         $\mathscr{B}(\alpha_i,g_i,\beta_i)=\{(\alpha_i,
         g_i\cdot U,\beta_i)\mid
         U\in\mathscr{B}_{G_i}(e_i)\}$ is a base of the topology at the non-zero
         element $(\alpha_i,g_i,\beta_i)\in
         B_{\lambda_i}(G_i)$, where $\mathscr{B}_{G_i}(e_i)$ is a base of the topology
         at the unity $e_i$ of the topological group $G_i$; and
    \item[$(b)$] the zero $0$ is an isolated point in
$\left(\sum_{i\in\mathscr{A}}B_{\lambda_{i}}(G_i),\tau\sb{\star}\right)$.
\end{itemize}
By Theorem~II.4.3 of~\cite{Petrich1984},
$\sum_{i\in\mathscr{A}}B_{\lambda_{i}}(G_i)$ is a primitive
inverse semigroup and simple verifications show that
$\sum_{i\in\mathscr{A}}B_{\lambda_{i}}(G_i)$ with the topology
$\tau\sb{\star}$ is a topological inverse semigroup.

We define the map $f\colon S\rightarrow B_{\lambda_{i_0}}(H)$ as
follows:
\begin{equation*}
    f(x)=
    \left\{%
\begin{array}{ll}
    h(x), & \hbox{if}~x\in B_{\lambda_{i_0}}(G_{i_0});\\
    0, & \hbox{if}~x\notin B_{\lambda_{i_0}}(G_{i_0}), \\
\end{array}%
\right.
\end{equation*}
where $0$ is zero of $S$. Evidently the defined map $f$ is a
continuous homomorphism. Then $f(S)=B_{\lambda_{i_0}}(h(G_{i_0}))$
is a dense proper inverse subsemigroup of the topological inverse
semigroup $\left(B_{\lambda_{{i\sb{0}}}}(H),\tau\sb{H}\right)$.
The obtained contradiction completes the statement of the theorem.
\end{proof}

Theorem~\ref{theorem3.27} implies

\begin{corollary}\label{corollary3.28}
For a primitive inverse semigroup $S$ the following assertions are
equivalent:
\begin{itemize}
    \item[$(i)$] every maximal subgroup of $S$ is algebraically
      $h$-closed in the class of topological inverse semigroups;
    \item[$(ii)$] the semigroup $S$ is algebraically
      $h$-closed in the class of topological inverse semigroups.
\end{itemize}
\end{corollary}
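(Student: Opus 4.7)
The plan is to reduce Corollary~\ref{corollary3.28} to Theorem~\ref{theorem3.27} applied to the discrete topology $\mathfrak{d}$ on $S$. By definition, $S$ is algebraically $h$-closed in the class of topological inverse semigroups precisely when $(S,\mathfrak{d})$ is absolutely $H$-closed in that class; and $(S,\mathfrak{d})$ is automatically a primitive topological inverse semigroup, so Theorem~\ref{theorem3.27} applies to it. Moreover, each maximal subgroup of $(S,\mathfrak{d})$ carries the induced discrete topology.

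Next I would invoke the Gutik--Pavlyk characterization recalled just before Theorem~\ref{theorem3.25}: a topological group $G$ is absolutely $H$-closed in the class of topological inverse semigroups if and only if $G$ is $h$-complete. Applied to the discrete topology on a maximal subgroup $G$ of $S$, this gives: $G$ is algebraically $h$-closed in the class of topological inverse semigroups if and only if the discrete group $(G,\mathfrak{d})$ is $h$-complete. So condition $(i)$ of the corollary rephrases as ``every maximal subgroup of the primitive topological inverse semigroup $(S,\mathfrak{d})$ is $h$-complete'', and condition $(ii)$ rephrases as ``$(S,\mathfrak{d})$ is absolutely $H$-closed in the class of topological inverse semigroups''.

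With these translations in hand, the implication $(i)\Rightarrow(ii)$ of the corollary follows from the implication $(i)\Rightarrow(ii)$ of Theorem~\ref{theorem3.27}, by taking the instance $\tau=\mathfrak{d}$ of the ``for every inverse semigroup topology'' conclusion. For the converse, algebraic $h$-closedness of $S$ gives absolute $H$-closedness of $(S,\mathfrak{d})$; since $\mathfrak{d}$ is in particular an inverse semigroup topology on $S$, this means condition $(ii)$ of Theorem~\ref{theorem3.27} holds at $\tau=\mathfrak{d}$, so the implication $(ii)\Rightarrow(i)$ of that theorem yields $h$-completeness of each maximal subgroup of $(S,\mathfrak{d})$, and hence the algebraic $h$-closedness of each maximal subgroup of $S$.

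I do not anticipate any serious obstacle: the proof is essentially an unwinding of the definition of ``algebraically $h$-closed'' together with a single specialisation of Theorem~\ref{theorem3.27} to the discrete topology. The only point requiring minor care is the universal quantifier over inverse semigroup topologies appearing in Theorem~\ref{theorem3.27}$(ii)$, which must be specialised correctly to $\tau=\mathfrak{d}$ in both directions of the argument.
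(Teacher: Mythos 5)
Your overall route is exactly the paper's: the paper proves this corollary by nothing more than the words ``Theorem~\ref{theorem3.27} implies'', and your unwinding via the discrete topology and the Gutik--Pavlyk equivalence (absolutely $H$-closed in the class of topological inverse semigroups $\Leftrightarrow$ $h$-complete, applied to a discrete group) is the intended reading. The direction $(i)\Rightarrow(ii)$ is handled correctly, since there you only weaken a universally quantified conclusion to the instance $\tau=\mathfrak{d}$.

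In the direction $(ii)\Rightarrow(i)$, however, there is a small but genuine logical gap as written: condition $(ii)$ of Theorem~\ref{theorem3.27} is the statement that $S$ is absolutely $H$-closed for \emph{every} inverse semigroup topology, and ``condition $(ii)$ holds at $\tau=\mathfrak{d}$'' is strictly weaker than that hypothesis, so you cannot invoke the implication $(ii)\Rightarrow(i)$ of the theorem from the single discrete instance without further comment. The fix is one line, and you should state it: if $(S,\mathfrak{d})$ is absolutely $H$-closed in the class of topological inverse semigroups, then so is $(S,\tau)$ for every inverse semigroup topology $\tau$ on $S$, because any continuous homomorphism $h\colon (S,\tau)\rightarrow T$ into a topological inverse semigroup $T$ is also continuous as a map from $(S,\mathfrak{d})$ (the discrete topology being the finest), so $h(S)$ is already an $H$-closed continuous homomorphic image of $(S,\mathfrak{d})$. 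This upgrades your single instance to the full condition $(ii)$ of Theorem~\ref{theorem3.27}, after which its implication $(ii)\Rightarrow(i)$, together with the Gutik--Pavlyk equivalence for the discrete maximal subgroups, gives exactly assertion $(i)$ of the corollary. (Alternatively one can observe that the proof of Theorem~\ref{theorem3.27}, $(ii)\Rightarrow(i)$, only ever tests the topology $\tau_{\star}$ built from the given group topologies, and when all the $G_i$ are discrete this $\tau_{\star}$ is the discrete topology itself.)
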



\section*{Acknowledgements}

The authors are grateful to the referee for several comments and
suggestions which have considerably improved the original version
of the manuscript.



\end{document}